\newtheorem{theorem}{Theorem}[section]
\newtheorem{lemma}[theorem]{Lemma}
\newtheorem{prop}[theorem]{Proposition}
\theoremstyle{definition}
\newtheorem{defn}[theorem]{Definition}
\newtheorem{rmk}[theorem]{Remark}
\newtheorem{constr}[theorem]{Construction}
\newtheorem{example}[theorem]{Example}
\newcommand{\cinf}[1]{C^{\infty}(#1)}
\newcommand{\bbR}{\mathbb{R}}
\newcommand{\bbC}{\mathbb{C}}
\newcommand{\bbA}{\mathbb{A}}
\newcommand{\roo}{\mathbb{R}^{1|1}}
\newcommand{\Man}{\mathrm{Man}}
\newcommand{\SMan}{\mathrm{SMan}}
\newcommand{\susyebord}{1|1\textrm{-}\mathrm{EBord}}
\newcommand{\susyeft}{1|1\textrm{-}\mathrm{EFT}}
\newcommand{\diracD}{\slashed{D}}
\newcommand{\dual}[1]{{#1}^{\vee}}
\newcommand{\nqvect}{\mathcal{N}\mathrm{QVect}}
\newcommand{\target}{\textrm{Mod}}
\newcommand{\ENDO}[1]{\mathrm{End}(#1)}
\newcommand{\AUT}[1]{\mathrm{Aut}(#1)}
\title{Supersymmetric Euclidean field theories and K-theory}
\author{Peter Ulrickson}
\address[ulrickson@cua.edu]{116 Aquinas Hall, The Catholic University of America, Washington, DC 20064}
\begin{document}

\maketitle

\begin{abstract}
We construct spaces of 1-dimensional supersymmetric Euclidean field theories and show that they represent real or complex K-theory. A noteworthy feature of our bordism category is that the identity bordism of a point is connected to intervals of positive length.
\end{abstract}


\section{Introduction}

\noindent
The purpose of this paper is to prove the following theorem.

\begin{theorem} \label{mainTheorem}
  There are spaces $\susyeft_\bbC$ and $\susyeft_\bbR$ of one-dimensional, oriented, supersymmetric Euclidean field theories having the homotopy type of $BU \times \mathbb{Z}$ and $BO \times \mathbb{Z}$, respectively.
\end{theorem}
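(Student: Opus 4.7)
The plan is to identify a 1|1-dimensional supersymmetric Euclidean field theory with concrete operator-theoretic data, and then recognize the moduli space of such data as the claimed classifying space. Concretely, I would first analyze what a symmetric monoidal functor $E \colon \susyebord \to \target$ looks like on generators and relations. Since every connected 1|1-Euclidean interval is parametrized by $(t,\theta) \in \roo_{\geq 0}$ and composition corresponds to the super-semigroup law on $\roo$, the restriction of $E$ to intervals is determined by the super module $V = E(\mathrm{pt})$ together with an even generator $H$ and an odd generator $Q$ satisfying $Q^2 = H$, via $E(I_{(t,\theta)}) = e^{-tH}(1 - \theta Q)$. Thus, modulo the data on super circles, a 1|1-EFT is a super module $V$ equipped with an odd endomorphism $Q$.

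Next, I would use the feature highlighted in the abstract, that the identity bordism of the point is connected to intervals of positive length, to enforce that $e^{-tH} \to \mathrm{id}_V$ continuously as $t \to 0^+$. Together with the requirement that the partition function on super circles be well defined, i.e.\ that $e^{-tH}$ be trace class for $t>0$, this pins down $Q$ as an odd self-adjoint Fredholm operator on a super Hilbert space with compact resolvent. Writing $Q$ in block form with respect to the $\mathbb{Z}/2$-grading $V = V^+ \oplus V^-$ produces a complex Fredholm operator $D \colon V^+ \to V^-$, and the Atiyah--Singer classification identifies the space of such operators with $BU \times \mathbb{Z}$, with the $\mathbb{Z}$ factor recording the index. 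In the real case, an intrinsic real structure on $V$ commuting with $Q$, coming from the target being $\mathbb{R}$-linear, produces a real Fredholm operator and yields $BO \times \mathbb{Z}$ by the analogous real classification of Atiyah--Singer and Karoubi.

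The step I expect to be the main obstacle is topological rather than algebraic: showing that the natural topology on $\susyeft$, defined presumably as a sheaf of symmetric monoidal functors on the site of supermanifolds, agrees as a weak homotopy type with the operator-norm topology on the space of odd self-adjoint Fredholm operators. This requires promoting the assignment $E \mapsto (V, Q)$ to a weak equivalence, which in turn requires constructing smooth families of field theories from continuous families of Fredholm data, and conversely extracting continuous families of operators from smooth families of EFTs over arbitrary test supermanifolds. The connectivity of the identity to positive-length intervals is used essentially here, as it rules out pathological field theories that fail to come from genuine Fredholm data and supplies the continuity needed to deform the degenerate $t=0$ situation to honest operator data.
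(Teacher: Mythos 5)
Your proposal is not the paper's argument, and more importantly it does not fit the definitions the paper actually uses. The algebraic target $\target$ (Definition~\ref{algebraicTarget}) consists of submodules of sections of \emph{finite-rank} super vector bundles, not of super Hilbert spaces, so there is no space of odd self-adjoint Fredholm operators with compact resolvent in sight. Moreover, the feature you invoke --- connectivity of the identity bordism to intervals of positive length --- has exactly the opposite consequence from the one you draw. Condition~(\ref{identityCondition}) of Definition~\ref{quadruples} forces the family $e^{-t\diracD^2}$ to extend through $t=0$ as the identity \emph{within} $\target$; as the introduction stresses and Lemma~\ref{generatorforbundles} makes precise, this forces the infinitesimal generator $\diracD$ to itself be a morphism of $\target$, hence an endomorphism of a finite-rank object. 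If one instead insists on an infinite-dimensional super Hilbert space, a trace-class semigroup cannot converge to the identity in operator norm, which is precisely why the earlier constructions \cite{StolzTeichnerelliptic}, \cite{HST}, \cite{Cheung} use ``positive'' bordism categories keeping the identity separated from positive-length intervals. Your sketch essentially reproduces that earlier strategy; the stated novelty of this paper is to avoid it.

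Because the field theories here are fiberwise finite-dimensional, the $K$-theory class must be encoded not in a single unbounded operator but in how fiber dimensions jump across a family --- a group-completion picture rather than a Fredholm-operator picture. Concretely, the paper builds a Grassmannian model $\nqvect$ from the topological category $\mathrm{QVect}_\bullet$ of nested finite-dimensional super subspaces of $H$ together with odd involutions on complements, whose classifying space represents $K$-theory (Theorem~\ref{htpyTypeQVect}, via \cite[Section~7]{HST}). It then defines a map $F\colon\nqvect\to\susyeft$ (Definition~\ref{comparisonMap}) using a cutoff $\rho$ to convert odd involutions into infinitesimal generators that blow up at the faces of a simplex, and proves that $F$ is a weak equivalence after applying $Ex^\infty$. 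The analytic input replacing your ``convergence to Fredholm data'' is the Perturbation Lemma (Lemma~\ref{perturb}), which controls the spectrum of $A(1)$ even when the module is not locally projective; the combinatorial input is Construction~\ref{keyConstruction} together with Lemma~\ref{equivalenceOfSubdivisions}, which cut a simplex of field theories by spectral radius into a subdivided simplex of $\nqvect$. None of this machinery is anticipated by your proposal, which would need to be entirely reworked to address the finite-rank, jumping-dimension setting of the paper.
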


\subsection{Background}
In the Stolz-Teichner program for functorial quantum field theories and cohomology, a number of spaces of $1|1$-Euclidean field theories have been constructed. In each case one obtains a space of operators known to represent K-theory. In \cite{StolzTeichnerelliptic}, field theories are identified with super semigroups of self-adjoint Clifford linear Hilbert-Schmidt operators on a Hilbert space. In \cite{Markert} it is shown that one can obtain a model of the connective ko-theory spectrum by proceeding in a similar manner but considering right and left linearity with respect to two Clifford algebra actions. The paper \cite{HST} shows that a space of $1|1$-Euclidean field theories represents KO by relating field theories to self-adjoint trace-class semigroups of operators on a Hilbert space. In \cite{Cheung} field theories are identified with odd self-adjoint operators with compact resolvent defined on a graded subspace of a super Hilbert space.

In a lower dimension, \cite{HohnholdKreckStolzTeichner} shows that concordance classes of $0|1$-Euclidean (or topological) field theories yield de Rham cohomology, either with a $\mathbb{Z}/2$ grading or with the ordinary grading.

Combining the $1|1$ and $0|1$ dimensional pictures, it is possible to interpret the Chern character in terms of field theories and dimensional reduction. Along these lines see \cite{DumitrescuOnChern} and \cite{Han} for the ordinary Chern character, and \cite{Berwick-EvansHan}, \cite{Stoffel1}, and \cite{Stoffel2} for equivariant generalizations.

The larger picture is that one wishes to have geometric cocycles for the cohomology theory of topological modular forms. Segal \cite{Segalelliptic} suggested that $2$-dimensional conformal field theories would provide cocycles for some elliptic cohomology theory. These ideas were developed by Stolz and Teichner \cite{StolzTeichnerelliptic} who propose $2|1$-Euclidean field theories as the model for TMF. One result in this direction is the theorem that the partition function of a $2|1$-Euclidean field theory is a holomorphic modular function \cite{STsusy}. The $2|1$-dimensional case is an area of continuing work.

\subsection{Novelty}
Given the variety of constructions listed above, it is worth briefly noting what is distinctive about the new space of field theories presented here. The novelty is in the definition of field theory. Roughly speaking, we work with a more `natural' category of Euclidean bordisms, one in which the identity bordism is not separated from bordisms of positive length. This is spelled out in greater detail in Appendix~\ref{sec:bordismAppendix}, see in particular Figure~\ref{identityRelationFigure}. An importance consequence of such a bordism category is that the infinitesimal generator of the semigroup of endomorphisms defining the field theory must itself be a morphism in the algebraic target category.

To speak more precisely, we work with a presentation of such a bordism category, a presentation which we take for granted. Our definition of field theory (Definition~\ref{quadruples}) is in terms of generators and relations.

In order to accommodate the family of bordisms connecting the identity with intervals of positive length and obtain the correct homotopy type for the space of field theories it is necessary to work with strange families of vector spaces. Lemma~\ref{perturb} shows that spectra of operators work well in these strange families, but in order to prove this Lemma we need to restrict ourselves to families of vector spaces that are, in a suitable sense, finite. This finiteness is given in Definition~\ref{algebraicTarget}.

\subsection{Heuristics of the homotopy type}
Before delving into the substance of the paper we offer two sketches of a relation between $1$-dimensional geometric functorial field theories and K-theory.

\subsubsection{Supersymmetric quantum mechanics}
The notion of supersymmetric quantum mechanics in a spin manifold (\cite{Wittensusyqm}, \cite{Wittenindex}) yields a $1|1$-Euclidean field theory, and a bundle of spin manifolds determines a family of such field theories parameterized by the base. Given such a family one also has, by the Index Theorem for families, an element in the K-theory of the parameter space. We wish to relate field theories to K-theory in such a way that under this relation the family of field theories yields the index of the family.

\subsubsection{Group completion}
Oriented $1$-dimensional topological quantum field theories correspond, by evaluation on a point, to finite-dimensional vector spaces. The classifying space of all such field theories is then equivalent to $\coprod BGL_n$ and has a (homotopy) commutative monoid structure of direct sum. We would like to have another bordism category, with a forgetful functor to the oriented bordism category, in such a way that the induced map on classifying spaces (of categories of functors to vector spaces) is a group completion, and it turns out that the category of $1|1$-Euclidean bordisms is such a bordism category.

\subsection{Acknowledgements}
I thank Stephan Stolz and Augusto Stoffel for many helpful and enjoyable conversations.

\section{Definitions and Preliminary Matter}
\label{sec:Def}
\subsection{Conventions}

Here are some conventions of terminology and notation.

Given a supermanifold $S$, the underlying ordinary smooth manifold, known as the reduced manifold, will be denoted $S_{\mathrm{red}}$. The word `super' denotes $\mathbb{Z}/2$-graded. A contravariant functor from the category of smooth manifolds or supermanifolds to the category of sets will be called a generalized manifold or generalized supermanifold, respectively. We use $\cinf{S}$ to denote the global sections of the sheaf of functions on a supermanifold $S$. An excellent introduction to supermanifolds is \cite{DeligneMorgan}. 

Fix an infinite-dimensional separable super Hilbert space $H$ whose even and odd subspaces are each infinite-dimensional. Give $H$ a real structure as well, and pick an isomorphism $H \otimes H \to H$.

For set-theoretic reasons, we require that our supermanifolds be concrete in some way. Let a concrete supermanifold be a smooth manifold together with an odd finite rank real subbundle of the trivial bundle with fiber $H$, viewed as a supermanifold by taking the sheaf of algebras to be the sheaf of sections of the bundle of exterior algebras. By Batchelor's Theorem \cite{Batchelor} every supermanifold is realized in this way. We will use $\SMan$ to denote the category of concrete supermanifolds. Thus $\cinf{S}$ will consist of certain smooth $H$-valued functions on $S_{\mathrm{red}}$.

We denote by $\bbA^k$ the hyperplane in $\bbR^{k+1}$ consisting of points whose coordinates $(x_0, x_1, \ldots , x_k)$ sum to $1$. The embedding $|\Delta^k| \subset \bbA^k$ has the standard $k$-simplex as the subspace of $\bbA^k$ consisting of points all of whose coordinates are non-negative.

There is an subdivision endofunctor $sd$ of the category of simplicial sets. The subdivided $k$-simplex $sd \Delta^k$ has $k!$ non-degenerate $k$-simplices. More generally there are $(k!)^p$ non-degenerate $k$-simplices in the $p$-fold subdivision $sd^p \Delta^k$. We will use capitals $I$ to refer to an enumeration of these little simplices inside the repeatedly subdivided $k$-simplex. There are also affine linear barycentric subdivision maps $\bbA^k \to \bbA^k$ corresponding to these smaller simplices of the subdivided simplex, and they are denoted $b_I$. For a precise formula see Definition~\ref{barycentricMaps}.

\subsection{Algebraic Target Category}

A functorial quantum field theory is a symmetric monoidal functor from a category of bordisms to an algebraic target. Lacking a comprehensive description of our desired geometric bordism category, we defer discussion of it to Appendix~\ref{sec:bordismAppendix}. We will restrict ourselves at present to considering the images of generating objects and relations in the algebraic target. This target consists of submodules of sections of finite rank vector bundles. For the reader not concerned about the details of how we obtain sets rather than proper classes, the previous sentence suffices as a definition of an equivalent category.

\begin{defn}
\label{algebraicTarget}
  The stack $\target$ over concrete supermanifolds consists of
  \begin{itemize}
  \item Objects: Subsets of $\cinf{S_{\mathrm{red}}} \otimes H$, endowed with $\cinf{S}$-module structure compatible with the quotient map $\cinf{S} \to \cinf{S_{\mathrm{red}}}$, where $S$ is a concrete supermanifold. The modules are subject to the condition that they are submodules of sections of a finite rank super vector bundle which is a subbundle of $S_{\mathrm{red}} \times H$.
  \item Morphisms: A morphism from a $\cinf{S}$-module $M$ to a $\cinf{T}$-module $N$ is a a map $\cinf{T} \to \cinf{S}$ of super algebras and a grading preserving $\cinf{S}$-module map from $M$ to $\cinf{S} \otimes_{\cinf{T}} N$.
  \item Fibration: The map $\target \to \SMan$ is the forgetful functor.
  \end{itemize}  
\end{defn}
\noindent
We will use $\target_S$ to denote the fiber category over a supermanifold $S$. 

\begin{rmk}
  It could be tempting to impose finiteness by requiring that modules are simply finitely generated, though not necessarily projective. This will not work. The prototype of module we are interested in is smooth functions on the real line which are identically zero for $x \le 0$, viewed as a module over smooth functions on the real line.
\end{rmk}

\begin{rmk}
  We require that the supermanifolds be concrete so that modules always consist of $H$-valued smooth functions on the reduced manifold, even after pulling back along a map of supermanifolds. We then use the chosen isomorphism $H \otimes H \to H$ to view the pulled back module as a submodule of a bundle of sections. Concreteness is necessary so that we can have a simplicial set of field theories.
\end{rmk}

\subsection{Families of Field Theories}
\label{definingFamilies}

Before defining families of field theories it is necessary to define super semigroups. A family of field theories will roughly speaking be a super semigroup of endomorphisms of an object in $\target$. 

The supermanifold $\roo$ is a Lie group with the operation \[(s,\theta),(t,\eta) \mapsto (s + t + \theta \eta, \theta + \eta)\] using the functor of points formalism. We will be concerned with the semigroup $\roo_{>0}$ obtained by restricting the operation on $\roo$.

Generalized supermanifolds have a monoidal structure using Cartesian product of sets. We thereby speak of group or semigroup objects in generalized supermanifolds.

\begin{defn}
\label{superSemiGroup}
  A super semigroup in generalized supermanifolds is a semigroup $M$ in generalized supermanifolds together with a morphism $\roo_{>0} \to M$ of semigroups.
\end{defn}

Given a super vector space $Z$, we view it as a generalized supermanifold by the assignment $S \mapsto (\cinf{S} \otimes Z)^0$, where the superscript zero refers to taking the even part of the graded tensor product. We will be concerned with the case that $Z$ is the vector space of endomorphisms of a given super vector space $V$, in which case we speak of a super semigroup of endomorphisms of $V$.

We now define our field theories as certain quadruples associated with the category $\target$. We are thinking of restricting a symmetric monoidal functor to the generating objects and relations of the presentation in Theorem~\ref{presentationOfEbord}.

\begin{defn}
  \label{quadruples}
  A family of field theories $(V,W,L,R)$ parameterized by a smooth manifold $S$ is pair of objects $V$ and $W$ in $\target_S$, a morphism $L:W \underset{\cinf{S}}{\otimes} V \to \cinf{S}$ in $\target_S$, and an element $R \in \cinf{\roo_{>0}} \underset{{\bbR}}{\otimes} (V \underset{\cinf{S}}{\otimes} W)$, subject to the following relations.
  \begin{enumerate}
  \item The element $R$ determines a super semigroup of endomorphisms of $V$ or $W$ as it is composed with $L$ in the two possible ways. (See Figure~\ref{identityRelationFigure} for one such composition.)
  \item \label{identityCondition} The map $(s^2,\theta): \roo \setminus \{0\} \to \roo_{> 0}$, when used to pull back the endomorphisms of $V$ or $W$ determined by composing $R$ and $L$ in the two possible ways, extends at $0$ as the identity to a family of endomorphisms parameterized by $\roo$. Here $s$ is the standard coordinate function on $\bbR$. (Again, see Figure~\ref{identityRelationFigure}.) 
  \item The trace and supertrace of the endomorphisms of $V$ and $W$ determined by $L$ and $R$ give smooth functions on $\roo_{>0} \times S$. (This comes from the fact that field theories can be evaluated on closed bordisms, which here are circles.)
  \end{enumerate}
\end{defn}

A family of field theories parameterized by $S$ is to be thought of as an $S$-point of the mapping stack between a bordism category and the category $\target$. A morphism between two such families consists of maps over $S$ between the modules, compatible with the morphisms $R$ and $L$.

By means of the real structure on $H$, we can require that the modules $V$ and $W$ be real. This gives the space of real field theories. All the things we will prove go through equally well in the real and complex cases, and we do not distinguish them.

In general, when passing from a fibered functor $\mathcal{F} \to \mathcal{C}$ to a functor $\mathcal {C}^{op} \to \mathrm{Groupoids}$ one obtains only a weak (or pseudo) functor. In the present case, our concrete category means that we get a strict functor $\Man \to \mathrm{Groupoids}$, once we restrict to ordinary manifolds, since then we are just precomposing $H$-valued functions with smooth maps.

Thinking of field theories as a mapping stack or fibered category, the base site of the fibration is $\SMan$. The category $\Man$ of smooth manifolds is a subcategory, and there is an embedding $\Delta \hookrightarrow \Man$ sending $\Delta^k$ to affine $k$-dimensional space $\bbA^k$. In this way we obtain $\Delta^{op} \to \mathrm{Groupoids}$. Postcomposing with the functor forgetting morphisms yields a simplicial set. We make an adjustment to obtain compact simplices (see Definition~\ref{equivalentSimplices} and Definition~\ref{ssetFromGenMan}) in order to define simplicial sets $\susyeft_\bbC$ and $\susyeft_\bbR$. We refer to both, {\it mutatis mutandis}, as $\susyeft$. The purpose of the present paper is to determine the homotopy type of those simplicial sets.

\begin{rmk}\label{forgetMorphisms}
  The omission of natural transformations between field theories, so that $\susyeft$ is a simplicial set rather than a simplicial groupoid, is justified, just as it is appropriate to view the singular set of a topological space as a simplicial set rather than as a simplicial space.
\end{rmk}

\begin{example}
  In the K-theory of a point the zero vector space represents the same class as the sum of an even and odd line. This equivalence needs to be implemented in our space of field theories by a path of field theories between the topological theory assigning $\{0\}$ to a super point and the topological theory assigning an even plus an odd line to the super point. Here is a sketch of how such a path is constructed. A path in the space of field theories is a family of field theories parameterized by $\bbR$. Let $V$ be the module whose elements are smooth functions on $\bbR$ which are zero for $x \le 0$ and which for positive $x$ lie in some two-dimensional subspace of $H$ whose superdimension is zero. Let $W$ be the same module. Pick some suitable pairing $V \otimes W \to \cinf{\bbR}$ as $L$. Now define a super semigroup of endomorphsisms of $V$ by defining the infinitesimal generator as
\[\diracD =
  \begin{bmatrix}
    0 & \frac{1}{x} \\
    \frac{1}{x} & 0 \\
  \end{bmatrix}
\]
for positive $x$ and $0$ for non-positive $x$. (The infinitesimal generator is discussed further in Section~\ref{sec:ssg}.) Observe that $e^{-\diracD^2} $ has smooth trace and supertrace. Away from $x = 0$ we can modify $\diracD$ so that at $x=1$ it is the zero matrix. In that way we obtain a path of field theories between the topological theory with vector space $V = \{0\}$ (at $x = 0$) and the topological theory with vector space of dimension $2$ and superdimension $0$ at $x=1$. 

\end{example}

\begin{rmk}\label{restrictToPoints}
  Families of field theories can be pulled back along smooth maps. In the previous example, we can restrict along the inclusions of $0$ and $1$ in $\bbR$ to get individual field theories. Then the $\bbR$-family of field theories is a path between those two points in the space of field theories. We will regularly work by restricting a family to a point. Families of field theories are determined by their restrictions to each point.
\end{rmk}

\subsection{A classifying space}

We will determine the homotopy type of $\susyeft$ by establishing its equivalence with another space denoted $\nqvect$. Roughly speaking, $\nqvect$ is the nerve of a topological category.

Recall the infinite dimensional separable super Hilbert space $H$ whose odd and even parts are each infinite dimensional.

\begin{defn}
  The topological category $\mathrm{QVect}_\bullet$  consists of
  \begin{itemize}
  \item Objects: $\mathrm{QVect}_0$, the Grassmannian $Gr(H)$ of finite-dimensional super subspaces of the super Hilbert space $H$ endowed with its ordinary topology.
  \item Morphisms: $\mathrm{QVect}_1$, the space whose points are inclusions of super subspaces of $H$, $V_0 \subset V_1 \subset H$, together with an odd involution $\alpha$ on some complementary subspace $V_1'$ of $V_0$ in $V_1$. In other words we have $V_1 = V_0 + V_1'$, and an odd involution on the second summand. Via orthogonal projection onto a subspace we can view the involution as an element of $B(H)$. Give $\mathrm{QVect}_1$ a topology by considering it as a subspace of $Gr(H) \times Gr(H) \times B(H)$.
  \end{itemize}
\end{defn}

The homotopy type of a closely related space is given in \cite[Section 7]{HST}. Suitably modifying that result to our more general case of arbitrary complementary subspaces (rather than their orthogonal ones) leads to the following conclusion.

\begin{theorem} \label{htpyTypeQVect}
  The classifying space of the topological category $\mathrm{QVect}_\bullet$ represents real or complex K-theory, according to whether the Hilbert space and the subspaces are respectively real or complex.
\end{theorem}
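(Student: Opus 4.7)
The plan is to transfer the computation from \cite[Section 7]{HST} by comparing $\mathrm{QVect}_\bullet$ to the subcategory $\mathrm{QVect}^\perp_\bullet$ in which the complementary subspace $V_1'$ is required to be the orthogonal complement $V_0^\perp \cap V_1$. For this orthogonal version the result is essentially that of HST, identifying its classifying space with $BU \times \mathbb{Z}$ in the complex case and $BO \times \mathbb{Z}$ in the real case. It then suffices to prove that the inclusion $\iota \colon \mathrm{QVect}^\perp_\bullet \hookrightarrow \mathrm{QVect}_\bullet$ induces a weak equivalence on classifying spaces, which I would do by showing that $N\iota$ is a levelwise weak equivalence of the underlying simplicial spaces.

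An $n$-simplex of $N\mathrm{QVect}_\bullet$ consists of a chain $V_0 \subset V_1 \subset \cdots \subset V_n$ of finite-dimensional super subspaces of $H$ together with, for each $i$, a complement $W_i$ of $V_{i-1}$ in $V_i$ and an odd involution $\alpha_i$ on $W_i$; the subspace $N\mathrm{QVect}^\perp_\bullet$ is cut out by the condition $W_i = V_{i-1}^\perp \cap V_i$ for every $i$. Forgetting the pairs $(W_i, \alpha_i)$ gives a projection onto the common space of chains, and over a fixed chain the fiber of $N\mathrm{QVect}_\bullet$ is a product over $i$ of bundles of odd involutions over the contractible affine space of complements of $V_{i-1}$ in $V_i$, which is modeled on $\mathrm{Hom}(V_i/V_{i-1}, V_{i-1})$. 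Because every complement $W_i$ projects isomorphically onto $V_i/V_{i-1}$, there is a canonical linear isomorphism between any two complements which transports odd involutions continuously, trivializing the involution bundle as a product of the affine base with a fixed factor of odd involutions on $V_i/V_{i-1}$. Restriction to the orthogonal complement, a single point in a contractible base, is therefore a homotopy equivalence fiberwise over the chain, so $N\iota$ is a levelwise weak equivalence. Combined with properness of the nerve, geometric realization preserves this equivalence, yielding $B\mathrm{QVect}^\perp_\bullet \simeq B\mathrm{QVect}_\bullet$ and hence the theorem.

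The main obstacle will be verifying that the HST argument for the orthogonal case, phrased there in terms of self-adjoint operators on a Hilbert space, translates cleanly to our formulation in terms of a pair of subspaces and an odd involution on a chosen complement. A secondary technical point is confirming properness (or Reedy cofibrancy) of the simplicial space $N\mathrm{QVect}_\bullet$, so that the levelwise weak equivalence descends to classifying spaces; this should follow from the smooth manifold structures on Grassmannians together with the fact that the degeneracies (insertion of identity morphisms) are closed embeddings.
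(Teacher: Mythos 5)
Your proposal is essentially identical to what the paper does: the paper's entire argument consists of citing \cite[Section 7]{HST} and remarking that one must modify that result from orthogonal to arbitrary complements, and your contraction of the affine space of complements (with transport of the odd involution via the canonical identification of each complement with $V_i/V_{i-1}$) is precisely the reduction the paper leaves unstated. The fleshed-out argument is correct; one small simplification is that the straight-line deformation retraction of graded complements onto the orthogonal one is already a continuous retraction of $N_n\mathrm{QVect}_\bullet$ onto $N_n\mathrm{QVect}^\perp_\bullet$ for each $n$, which sidesteps the need to separately invoke fiberwise comparison and properness.
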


The idea is that $\mathrm{QVect}_\bullet$ is Quillen's categorical group completion \cite{Grayson} of the category of finite dimensional inner product spaces with the monoidal structure of orthogonal direct sum.

There is also the analogous topological category for real super subspaces and inclusions. The homotopy type again comes from \cite[Section 7]{HST}. As in the case of field theories, we will not distinguish between the real and complex cases in our notation.

From $\mathrm{QVect}_\bullet$ we construct a bisimplicial set $\widehat{\nqvect}$ as follows. The $(k,l)$ bisimplices of $\widehat{\nqvect}$ are smooth maps

\begin{equation*}
  \bbA^k \to \overbrace{\mathrm{QVect}_1 \times_{Gr(H)} \mathrm{QVect}_1 \times_{Gr(H)} \ldots \times_{Gr(H)} \mathrm{QVect}_1}^l
\end{equation*}

\noindent
where the fibered product is with respect to the natural source and target maps.

A map $\bbA^k \to \mathrm{QVect}_1$ classifies a pair of nested vector bundles $V_0 \subset V_1$ on $\bbA^k$ and a continuous involution on a complement of $V_0$ in $V_1$. A collection of $l$ such maps is composable (i.e. in the nerve) when the larger bundle of one pair classified by $\bbA^k \to \mathrm{QVect}_1$ is the smaller bundle of the next pair. Thus, a $(k,l)$-bisimplex is a collection of $l+1$ bundles $V_0 \subset V_1 \subset \ldots \subset V_l$ on $\bbA^k$ together with involutions of complements of $V_i$ in $V_{i+1}$. The bisimplicial set $\widehat \nqvect$ is to be thought of as a smooth version of the level-wise singular set of the nerve of a topological category. 

\subsection{Generalized Manifolds and Simplicial Sets}
\label{gmanTosSetSection}
Given a generalized manifold $\Man^{op} \to Set$, restricting along the embedding $\Delta^{op} \to \Man^{op}$ yields a simplicial set. We modify the simplicial set resulting from such a process so that we have compact smooth simplices by introducing the following equivalence relation.

\begin{defn} \label{equivalentSimplices}
  Let $\mathcal{G}$ be a generalized manifold, and let $X$ and $Y$ be elements of $\mathcal{G}_{\bbA^k}$. Then $X$ and $Y$ are equivalent if for every smooth map $f:S \to \bbA^k$ whose image lies in $\Delta^k \subset \bbA^k$, the elements $f^*X$ and $f^*Y$ of $\mathcal{G}_S$ are equal.
\end{defn}

The equivalence relation above is used to define the simplicial set associated to a generalized manifold.

\begin{defn}
  \label{ssetFromGenMan}
  The simplicial set associated to a generalized manifold $\mathcal{G}$ is the restriction of $\mathcal{G}$ along $\Delta^{op} \to \Man^{op}$ modulo the relation of Definition~\ref{equivalentSimplices}.
\end{defn}

The definition above is how we speak of the simplicial set of field theories $\susyeft$. We do not distinguish between generalized manifolds and their associated simplicial sets in our notation. To be fully explicit we now spell out exactly what we mean by the simplicial sets $\susyeft$ and $\nqvect$.

\begin{defn}
  \label{susyeftDefn}
  The simplicial set $\susyeft$ has as $k$-simplices families of field theories parameterized by $\bbA^k$, modulo the relation of Definition~\ref{equivalentSimplices}. The face and degeneracy maps come from adding coordinates or inserting $0$.
\end{defn}

\begin{defn}
  \label{nqvectDefn}
  The simplicial set $\nqvect$ has as $k$-simplices maps
  \begin{equation*}
  \bbA^k \to \overbrace{\mathrm{QVect}_1 \times_{Gr(H)} \mathrm{QVect}_1 \times_{Gr(H)} \ldots \times_{Gr(H)} \mathrm{QVect}_1}^l
\end{equation*}
modulo the relation of Definition~\ref{equivalentSimplices} (modifying it slightly as necessary) with face and degeneracy maps arising from summing or repeating vector bundles.
\end{defn}

\section{Properties of Families of Field Theories}

In Definition~\ref{quadruples} we stated a definition of families of field theories in terms of data $(V,W,L,R)$ in $\target$. Now we clarify some of the consequences of the conditions imposed on those data. One is that super semigroups of operators have infinitesimal generators. Another is that the modules $V$ and $W$ in such a quadruple are dual to each other in a certain sense. A third property is that the families of operators arising from families of field theories have spectra that vary continuously.

\subsection{Infinitesimal Generator and Super Semigroup}
\label{sec:ssg}
In the case that $V$ is a finite-dimensional vector space (in which case we are considering a family of field theories parameterized by a $0$-manifold), super semigroups $\roo_{>0} \to \ENDO{V}$ correspond bijectively with odd endomorphisms of $V$ via

\begin{equation*}
\diracD \mapsto e^{-t \diracD^2}(1 + \theta \diracD)
\end{equation*}
\noindent

This statement admits the following generalization.

\begin{lemma} \label{generatorforbundles}
  Let $S$ be an ordinary smooth manifold. Suppose that an $S$-family of Euclidean field theories $E$ consisting of $(V,W,L,R)$ is such that the $\cinf{S}$-modules $V$ and $W$ are
  finitely-generated and projective (i.e. arise as sections of
  finite-rank vector bundles over $S$). Then the super semigroup associated to the family of field theories has an infinitesimal generator which is an odd bundle endomorphism of $V$.
\end{lemma}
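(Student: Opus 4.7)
The plan is to extract an infinitesimal generator $\diracD$ by expanding the super semigroup determined by $(R,L)$ in the odd coordinate on $\roo_{>0}$, using Condition~\ref{identityCondition} of Definition~\ref{quadruples} to pass to the endpoint $t=0$, and finally invoking the finitely-generated projective hypothesis to upgrade $\diracD$ from a module endomorphism to a bundle endomorphism. Composing $R$ with $L$ produces a super semigroup $\roo_{>0} \to \ENDO{V}$, which by the functor-of-points description of $\ENDO{V}$ is the datum of an even element $F \in (\cinf{\roo_{>0}} \otimes_\bbR \ENDO{V})^0$. Using the global decomposition $\cinf{\roo_{>0}} = \cinf{\bbR_{>0}} \oplus \theta \cdot \cinf{\bbR_{>0}}$, write $F(t,\theta) = A(t) + \theta B(t)$ with $A(t) \in \ENDO{V}^0$ and $B(t) \in \ENDO{V}^1$ smooth in $t>0$.

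Expanding the semigroup equation $F\bigl((t_1,\theta_1)\cdot(t_2,\theta_2)\bigr) = F(t_1,\theta_1)\cdot F(t_2,\theta_2)$ via the multiplication on $\roo$ and the Koszul sign convention yields the four coefficient identities
\[
A(t_1)A(t_2) = A(t_1+t_2), \qquad A(t_1)B(t_2) = B(t_1)A(t_2) = B(t_1+t_2),
\]
\[
B(t_1)B(t_2) = -A'(t_1+t_2).
\]
Now apply Condition~\ref{identityCondition}: the pullback of $F$ along $(s^2,\theta)$ is $A(s^2) + \theta B(s^2)$, which is required to extend smoothly over $s = 0$ to a family on $\roo$ whose restriction to the origin is the identity. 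Since $s \mapsto A(s^2)$ and $s \mapsto B(s^2)$ are manifestly even in $s$, a classical result identifies smooth extension across $s = 0$ with smooth extension of $A$ and $B$ to the closed half-line $[0,\infty)$; the identity requirement at $s=0$ then forces $A(0) = \mathrm{id}$.

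With this in hand, the rest is bookkeeping. Being a smooth one-parameter semigroup of module endomorphisms with $A(0) = \mathrm{id}$, the family $A$ satisfies $A(t) = \exp(tG)$ for $G := A'(0) \in \ENDO{V}^0$. Define $\diracD := B(0) \in \ENDO{V}^1$. Setting $t_1 = t_2 = 0$ in $B(t_1)B(t_2) = -A'(t_1+t_2)$ yields $\diracD^2 = -G$, while $t_2 = 0$ in $A(t_1)B(t_2) = B(t_1+t_2)$ yields $B(t) = A(t)\diracD = e^{-t\diracD^2}\diracD$. Consequently $F(t,\theta) = e^{-t\diracD^2}(1+\theta \diracD)$, realizing $\diracD$ as the infinitesimal generator of the super semigroup. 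Because $V$ is finitely generated projective over $\cinf{S}$, the Serre--Swan correspondence identifies $\diracD \in \ENDO{V}^1$ with an odd endomorphism of the corresponding finite rank super vector bundle.

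The step requiring the most care is the smooth extension of $B$ (and $A$) to $t=0$: this is precisely the service performed by Condition~\ref{identityCondition} via the $(s^2,\theta)$-pullback trick. Without that condition the coefficient $B(t)$ could in principle fail to have a finite limit as $t \to 0^+$, leaving no candidate for $\diracD$; with it, the evenness in $s$ converts smoothness at $s = 0$ into smoothness on $[0, \infty)$ in $t$, and the finitely generated projective hypothesis then ensures that the resulting module endomorphism is in fact a bundle endomorphism.
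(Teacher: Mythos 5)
Your proof is correct and takes essentially the same approach as the paper: both extract $\diracD$ as the odd component of the $\roo$-family (guaranteed by Condition~\ref{identityCondition}) restricted to the super point at $s=0$, and both use finite generation and projectivity to identify a $\cinf{S}$-linear endomorphism with a bundle endomorphism. The paper's proof is terser, skipping the explicit super semigroup bookkeeping and the Whitney even-function argument that you spell out.
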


\begin{proof}
  By restricting the family of endomorphisms parameterized by $\roo$ (whose existence is the condition (\ref{identityCondition}) of Definition~\ref{quadruples} a family of field theories must satisfy) along $\bbR^{0|1} \to \roo$, we obtain from a field theory a grading-preserving map
  \begin{equation*}
    V \to \cinf{\bbR^{0|1}} \otimes V
  \end{equation*}
which corresponds to a single odd endomorphism of $V$. At every point of $S$ the restriction of such an endomorphism determines the necessary super semigroup, and linearity of the endomorphism over $\cinf{S}$ means that we have a smooth bundle map. By exponentiation, this odd bundle endomorphism leads to the super semigroup of endomorphisms defining the family of field theories.
\end{proof}

Any odd bundle endomorphism leads by exponentiation to a super semigroup of endomorphisms.

If we are working with modules that do not arise from vector bundles, it is difficult to state exactly what sort of conditions exist on an infinitesimal generator. What is the case, though, is that a super semigroup of endomorphisms determines and is determined by a pair of maps $A: \bbR_{>0} \to \ENDO{V}^{0}$ and $B: \bbR_{> 0} \to \ENDO{V}^{1}$ subject to
  \begin{itemize}
  \item $A(s + t) = A(s)A(t)$
  \item $B(s+t)=A(s)B(t)$
  \item $A'(s+t) = B(s)B(t)$
  \end{itemize}

We will be concerned particularly with the ordinary semigroup $A(t)$ of endomorphisms. For related discussion see \cite[Lemma 3.2.14]{StolzTeichnerelliptic} and \cite[Proposition 5.9]{HST}.

\subsection{Pairing}

It will be necessary to know that in a $1|1$-Euclidean field theory the vector spaces $V$ and $W$ are isomorphic. Let $A(1)$ denote the endomorphism of $V$ given by composing $L$ and $R$ and evaluating the even semigroup at $1$. Let $\bar{A}(1)$ denote the corresponding endomorphism of $W$ given by composing $L$ and $R$ in the other possible way. Let $V_\lambda$ and $W_\lambda$ refer to generalized eigenspaces of $A(1)$ or $\bar{A}(1)$, respectively, with eigenvalue $\lambda$. 

\begin{lemma} \label{LPerfectPairing}
  The spectra of $A(1)$ and of $\bar{A}(1)$ coincide, and $L$ induces an isomorphism $W_\lambda \cong \dual{V_\lambda}$.
\end{lemma}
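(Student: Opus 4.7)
The plan is to reduce to the pointwise statement (Remark~\ref{restrictToPoints} shows that families of field theories are determined by their restrictions to points), where $V$ and $W$ become finite-dimensional super vector spaces since they are submodules of sections of finite-rank bundles. Writing $R(1) = \sum_i r_i \otimes s_i \in V \otimes W$, I would first extract the key adjunction identity
\[
  L(\bar A(1)\, w \otimes v) \;=\; \sum_i L(w \otimes r_i)\, L(s_i \otimes v) \;=\; L(w \otimes A(1)\, v),
\]
(with Koszul signs in the graded case), obtained simply by expanding both sides. This reflects that the two compositions of $R$ and $L$ stipulated in Definition~\ref{quadruples} produce the same scalar once further paired with $L$.

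Next, I would establish invertibility of $A(1)$ and $\bar A(1)$ via Lemma~\ref{generatorforbundles}: on a point the even part of the super semigroup has the form $A(t) = e^{-t\diracD^2}$ for an odd endomorphism $\diracD$ of $V$, so $A(1)$ is automatically invertible, and similarly for $\bar A(1)$. Writing $A(1) = R^{\flat} \circ L^{\natural}$ with $L^{\natural} : V \to \dual{W}$, $v \mapsto L(-, v)$, and $R^{\flat} : \dual{W} \to V$, $\phi \mapsto \sum_i \phi(s_i)\, r_i$, and analogously $\bar A(1) = R^{\sharp} \circ L^{\flat}$, invertibility forces both $L^{\natural}$ and $L^{\flat}$ to be injective; a dimension count then yields $\dim V = \dim W$ and shows $L$ is a perfect pairing. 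Spectral coincidence is then immediate: with $f = L^{\natural}$ and $g = R^{\flat}$ we have $A(1) = gf$ while $\bar A(1)$ agrees up to transpose with $fg$, and $gf$ and $fg$ share their nonzero spectra (here $0$ is absent).

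Finally, I would iterate the adjunction identity to obtain $L((\bar A(1) - \lambda)^N w \otimes v) = L(w \otimes (A(1) - \lambda)^N v)$; for $w \in W_\lambda$ annihilated by $(\bar A(1) - \lambda)^N$ and $v \in V_\mu$ with $\mu \neq \lambda$, the restriction of $(A(1) - \lambda)^N$ to $V_\mu$ is invertible, so $L$ vanishes on $W_\lambda \otimes V_\mu$. Hence $L$ splits as a direct sum of pairings $W_\lambda \otimes V_\lambda \to k$, and nondegeneracy of the total $L$ forces each summand to be nondegenerate, yielding the claimed isomorphism $W_\lambda \cong \dual{V_\lambda}$. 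The main obstacle I anticipate is the careful bookkeeping of Koszul signs and verifying the adjunction identity with the correct signs in the super setting; beyond that, the argument is essentially formal duality for finite-dimensional vector spaces.
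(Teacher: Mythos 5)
Your proof is correct and follows essentially the same route as the paper: both hinge on the adjunction identity $L(\bar{A}w,v)=L(w,Av)$, block-diagonalize $L$ with respect to generalized eigenspaces of $A(1)$ and $\bar{A}(1)$, and deduce non-degeneracy of $L$ from condition~(\ref{identityCondition}) of Definition~\ref{quadruples}. Your detour through Lemma~\ref{generatorforbundles} for invertibility of $A(1)$ and the $fg$/$gf$ spectral comparison are minor variants of the paper's more direct use of the fact that $A(t)\to\mathrm{id}$ as $t\to 0$.
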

\begin{proof}
  For notational simplicity in the proof we let $A$ denote $A(1)$, and similarly for $\bar{A}$.

By associativity of composition (in the bordism category) we have, for any $v \in V$ and $w \in W$, the following.
\begin{equation*}
  L(\bar{A}w,v) = L(w,Av)
\end{equation*}

Suppose that $Av = \lambda v$ and $\bar{A} w = \mu w$. Then the equation
\begin{equation*}
\mu L(w,v) = L(\bar{A}w,v) = L(w,Av) = \lambda L(w,v)
\end{equation*}
implies that $L$ can pair eigenvectors non-trivially only if $\lambda = \mu$. One can then extend the argument inductively to generalized eigenvectors. Importantly, every vector must pair non-trivially with some vector, since $A(t)$ approaches the identity as $t$ approaches $0$.
\end{proof}

\subsection{Perturbation Lemma}

In a family of field theories the spectrum of the semigroup of endomorphisms varies continuously as a function of the parameter space.

We begin by clarifying what is meant by continuously varying spectrum when the dimension of the vector space (and hence the sum of the multiplicities of eigenvalues) is changing. Consider the set $\mathcal{C}$ of finite sets of points in $\bbC$ labelled with natural numbers. Define a topology by letting the basic open sets $U$ be determined by finite collections of disjoint open disks in $\bbC$. The disks are labelled with natural numbers unless the disk contains $0$. A point in $\mathcal{C}$ is contained in such a $U$ if all its points lie in the various open disks of $U$ and the labels of these points sum to the label of the disk (for disks not containing the origin). An endomorphism of a finite dimensional vector space determines a point in $\mathcal{C}$ by its eigenvalues labelled with multiplicity. In a continuous family of operators, in the sense of continuity used here, new parts of the spectrum can be created, but they must appear at the origin.

With the topology on such configurations in place, we have the following lemma which generalizes the statement that the spectrum of an endomorphism of a finite dimensional vector space depends continuously on the endomorphism.

\begin{lemma}
  \label{perturb}
  Let $V$ be an element of $\target$ parameterized by a smooth manifold $S$, and let $A : V \to \cinf{\bbR_{>0}} \otimes V$ be a smooth semigroup of endomorphisms which has smooth trace and extends as the identity at $t=0$ (where $t$ is the coordinate on $\bbR$). Then the spectrum of $A(1)$ varies continuously as a function of $S$.
\end{lemma}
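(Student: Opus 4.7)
The idea is to realize the eigenvalues of $A(1)_s$, padded with zeros to a fixed total count, as roots of a monic polynomial $P_s(z)$ of constant degree whose coefficients are smooth in $s$; continuity of roots of polynomials then yields continuity in the topology on $\mathcal{C}$. The essential finiteness input is that an object $V$ of $\target_S$ sits inside the sections of a finite-rank super vector bundle $E$; writing $r$ for the rank of $E$, the fiber $V_s := \{v(s) : v \in V\} \subset E_s$ satisfies $\dim V_s \le r$ at every $s \in S$. Since $A(1)$ is $\cinf{S}$-linear, it descends to a well-defined finite-dimensional endomorphism $A(1)_s$ of $V_s$ with at most $r$ eigenvalues counted with multiplicity.

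Because $A$ is a semigroup, $A(1)_s^k = A(k)_s$ for every positive integer $k$, so the power sums $p_k(s) := \mathrm{tr}(A(1)_s^k) = \mathrm{tr}(A(k)_s)$ are smooth in $s$ by the smooth-trace hypothesis. Define $e_1(s), \ldots, e_r(s)$ from $p_1(s), \ldots, p_r(s)$ recursively via Newton's identities; each $e_j$ is a polynomial in $p_1, \ldots, p_j$ with rational coefficients, hence smooth. Listing the eigenvalues of $A(1)_s$ (with multiplicity) as $\lambda_1(s), \ldots, \lambda_{d_s}(s)$ with $d_s = \dim V_s \le r$, we have $p_k(s) = \sum_i \lambda_i(s)^k$ for $k \ge 1$, and padding the $d_s$-tuple with $r - d_s$ extra zeros changes none of these power sums. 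Consequently, the monic polynomial
\begin{equation*}
P_s(z) = z^r - e_1(s)\, z^{r-1} + e_2(s)\, z^{r-2} - \cdots + (-1)^r e_r(s)
\end{equation*}
factors as $z^{\,r - d_s} \prod_{i=1}^{d_s} (z - \lambda_i(s))$, so its multiset of roots is precisely the spectrum of $A(1)_s$ with multiplicity, padded to total count $r$ by additional zeros.

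The map sending a monic degree-$r$ polynomial to its unordered multiset of roots (i.e., a map $\bbC^r \to \bbC^r/S_r$) is continuous, so the padded spectrum varies continuously in $s$. To translate this into the topology of $\mathcal{C}$, fix $s_0 \in S$ and a basic open neighborhood of the spectrum of $A(1)_{s_0}$ specified by disjoint open disks, with labels equal to the multiplicities of the eigenvalues of $A(1)_{s_0}$ for those disks not containing $0$. Choosing the disks small enough, continuity of roots gives a neighborhood of $s_0$ on which every root of $P_s$ stays within a prescribed small distance of its value at $s_0$: the roots clustered near any $\lambda \neq 0$ remain inside the corresponding labelled disk with the same total multiplicity, while all padding zeros and any near-zero eigenvalues collect in the unlabelled disk around $0$. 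This is exactly the continuity condition for $\mathcal{C}$. The main technical point to nail down is that the pointwise scalar $\mathrm{tr}(A(1)_s)$ really agrees with the value at $s$ of the smooth global trace supplied by the definition of a family of field theories, so that $p_k(s) = \sum_i \lambda_i(s)^k$ genuinely holds; this uses the concreteness built into $\target$, which is what ensures that $A(1)_s$ is a genuine finite-dimensional operator with a classical trace.
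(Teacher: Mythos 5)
Your proof is correct and follows essentially the same approach as the paper's: both rely on the finite-rank bound built into Definition~\ref{algebraicTarget}, extract smooth power sums $p_k(s)$ from the semigroup property together with the smooth-trace hypothesis, pass to characteristic-polynomial coefficients via Newton's identities, and invoke continuity of polynomial roots. The packaging differs in a useful way: you pad to a single monic polynomial $P_s$ of fixed degree $r$ with smooth coefficients, whereas the paper fixes a basepoint $x$, compares the characteristic polynomial of $A(1)_y$ to the finite family $p(z), zp(z), \ldots, z^{n-m}p(z)$, and runs an explicit $\epsilon$--$\delta$ argument. Your version sidesteps the case analysis on how $\dim V_y$ compares to $\dim V_x$ and makes the appeal to continuity of roots cleaner. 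It also does not need the paper's explicit use of the invertibility of $A(1)_x$ (there used to bound the smallest eigenvalue away from the origin by $\nu$ and take $\epsilon < \nu/2$): since the disk around $0$ in a basic open set of $\mathcal{C}$ carries no label, the padding zeros and any genuine near-origin eigenvalues can be lumped together without needing to tell them apart. Your closing remark correctly identifies the one point both arguments treat somewhat informally, namely that $A(1)$ acts as a well-defined finite-dimensional operator on the fiber $V_s$ with pointwise trace equal to the evaluation of the given smooth trace, which is where the concreteness of $\target$ is genuinely used.
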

\begin{proof}
  For the case of an ordinary vector bundle this is the statement that the spectrum of a matrix depends continuously on the entries. We must address the complication present in our case of non-projective modules (in which case an endomorphism might not locally be just a family of matrices). This discussion will also highlight the role of the fact that $A(0) = id_V$ (i.e. not restricting ourselves to a category of `positive' bordisms in the language of \cite{HST}).  Recall that we have required that the module $V$ is a submodule of some finite-rank vector bundle. As a consequence, there is some finite maximal possible dimension of $V_y$, the fiber of $V$ at $y$ (recall Remark~\ref{restrictToPoints}), for all points $y$ in the parameter space. Suppose that $m$ is the dimension of $V_x$, and that $n$ is the rank of the ambient vector bundle containing $V$.

  Denote the characteristic polynomial of $A(1)_x$ by $p(z)$. Consider the following collection of polynomials.
\begin{equation*}
  p(z), zp(z), z^2p(z),\ldots,z^{n-m}p(z)
\end{equation*}

Since $A(t)_x$ is invertible for all $t$, the smallest eigenvalue of $A(1)_x$ is some distance $\nu$ from $0 \in \bbC$. Now anytime we choose $\epsilon < \frac{\nu}{2}$, we can find a $\delta$ so that any polynomial with degree between $m$ and $n$ whose coefficients are within $\delta$ of those of $p(z), zp(z),\ldots,$ or $z^{n-m}p(z)$ will have roots that are within $\epsilon$ of the roots of $p(z)$ or $0$.

The trace of the operator $A(1)$ as well as its powers $A(1)^k$ are all smooth functions of the parameter space $S$. The Newton-Girard identities imply that matrices whose powers have similar traces have similar spectra. We can then find a neighborhood of $x$ such that the coefficients of the characteristic polynomial of $A(1)_y$ are within $\delta$ of $z^kp(z)$, where $k$ is defined as $k:= dim V_y - dim V_x$. We then conclude that the extra $k$ eigenvalues of $A(1)_y$ are all within a distance $\epsilon$ of $0 \in \bbC$. Recall that $\epsilon$ is less than half the norm of the least eigenvalue of $A(1)_x$. This means that in a neighborhood of $x$ we distinguish the eigenvalues coming from $A(1)_x$ and those which appear at the origin.
\end{proof}

\section{The map and its inverse}

The space $\nqvect$ has a known homotopy type, and we now compare it to $\susyeft$. In one direction we construct a map of simplicial sets. In the other direction we construct simplices after choosing some extra data.

\subsection{Families of field theories from inclusions of vector spaces}
\label{sec:defineMap}
Here we define a map from the known space $\nqvect$ to the space of field theories. The idea of the construction is this. On vertices, the map will send a super vector space to the topological field theory which assigns that vector space to the super point (topological meaning that all intervals are assigned the identity operator). To an edge of $\nqvect$, which consists of an inclusion of vector spaces having the same super dimension and an odd involution on a complementary subspace, we assign a path in the space of field theories. This path starts at one of the topological field theories and ends at the other one. On the way, it is passes through field theories that are not topological. The odd involution on the complementary subspace is used to define a non-zero infinitesimal generator of a super semigroup of endomorphisms.

Given a vector bundle $V$ on $\bbA^k$, let $V(x_0, \ldots ,x_k)$ denote its fiber at a point $(x_0, \ldots ,x_k)$. We are always working with bundles embedded in $H$, so such fibers are finite dimensional subspaces of $H$.
Fix a decreasing smooth function $\rho(x)$ on the positive reals which is equal to $\frac{1}{x^2}$ for $x$ near $0$ and is identically $0$ in a neighborhood of $1$.

\begin{defn}
\label{comparisonMap}
  The map $F: \nqvect \to \susyeft$ is defined in the following manner. Given a $k$-simplex
\[ V_0 \hookrightarrow V_1  \hookrightarrow V_2 \ldots  \hookrightarrow V_k \qquad (\sigma_1, \sigma_2, \ldots \sigma_k) \]
where $\sigma_i$ denotes an odd involution on a complement of $V_{i-1}$ in $V_i$, observe that we obtain a direct sum decomposition
\[V_k \cong V_0 \oplus V_1' \oplus V_2' \oplus \ldots \oplus V_k' \]
ny means of the odd involutions. This $k$-simplex is assigned to a family of field theories $(V,W,L,R)$ parameterized by $\bbA^k$ such that
\begin{itemize}
\item $V$ is the module of smooth functions $\bbA^k \to H$ with that property that 
\begin{equation*}
f(x_0,\ldots,x_k) \in V_k(x_0 \ldots, x_k)
\end{equation*}
\noindent
for all $(x_0 \ldots, x_k)$ and that satisfy
\begin{equation*}
  f(x_0, \ldots, x_k) \in V_i(x_0 \ldots, x_k)
\end{equation*}
\noindent
whenever $x_{i},\ldots,x_k = 0$. Furthermore, the projection of $f$ onto $V_j'$ is required to vanish to all orders at points $x_{i},\ldots,x_k = 0$ when $j \ge i$.
\item $W = V$
\item $L$ is the composition $W \otimes V \hookrightarrow H \otimes H \cong \dual{H} \otimes H \to \bbC$ (we fix for all time a $\bbC$-linear isomorphism $\dual{H} \cong H$)
\item $R$ is determined by a super semigroup of operators and the pairing $L$. The odd involutions $\sigma_i$ determine a direct sum decomposition as above, and with respect to this decomposition the odd endomorphism $\diracD$ is given as follows.
\[\diracD :=  0 \oplus \rho \left(\sum_{i=1}^k x_i \right) \alpha_1 \oplus \ldots \oplus \rho \left( \sum _ {i=j}^k x_i \right) \alpha_j \oplus \ldots \oplus \rho(x_k)\alpha_k \]
\end{itemize}
\end{defn}

The next lemma shows that it is acceptable for the spectrum of $\diracD$ to grow without bound near the boundary of the standard $k$-simplex, as it does in the definition above.

\begin{lemma} \label{continuousInfGen}
  Multiplication by $\frac{1}{x^2}$ is a continuous automorphism of the space of functions 
  \begin{equation*}
    V:= \{f \in \cinf{\bbR}|f(x) = 0 \textrm{ for } x \le 0\}
  \end{equation*}
 endowed with the topology induced by $V \subset \cinf{\bbR}$, where the latter space has its standard Fr\'{e}chet topology involving norms of derivatives on compact subsets of $\bbR$.
\end{lemma}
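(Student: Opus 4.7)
The plan is to derive this from the open mapping theorem for Fréchet spaces applied to multiplication by $x^2$. First I observe that $V$ is a closed subspace of the Fréchet space $\cinf{\bbR}$, since the pointwise vanishing condition on $(-\infty,0]$ is preserved under limits in the Fréchet topology; hence $V$ is itself a Fréchet space in the subspace topology. Multiplication by the smooth function $x^2$ obviously preserves $V$ and is continuous on $\cinf{\bbR}$ by Leibniz, so it restricts to a continuous linear operator $M : V \to V$. If I can show $M$ is a bijection, the open mapping theorem supplies continuity of $M^{-1}$, which is precisely multiplication by $\frac{1}{x^2}$.

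Injectivity of $M$ is immediate. For surjectivity, given $f \in V$, the key observation is that since $f$ vanishes identically on $(-\infty,0]$ and is smooth at $0$, all left-hand derivatives of $f$ at $0$ are zero, so $f^{(k)}(0) = 0$ for every $k$. In particular $f(0) = f'(0) = 0$, and Taylor's theorem with integral remainder gives
\[
f(x) \;=\; x^2 \int_0^1 (1-t)\, f''(tx)\, dt.
\]
Setting $g(x) := \int_0^1 (1-t)\, f''(tx)\, dt$ produces a smooth function on $\bbR$ with $Mg = f$, and for $x \le 0$ the integrand vanishes identically (since $tx \in (-\infty,0]$ and $f'' \equiv 0$ there), so $g \in V$. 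Thus $M$ is a continuous linear bijection of Fréchet spaces, and the open mapping theorem completes the proof.

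The main obstacle is the surjectivity step, where one must notice that a smooth function on $\bbR$ vanishing on $(-\infty,0]$ automatically vanishes to infinite order at $0$; this is what allows the division by $x^2$ to remain in $\cinf{\bbR}$ rather than producing a merely continuous or even singular function at the origin. As an alternative avoiding the open mapping theorem, one could prove continuity directly by differentiating the integral formula to obtain $g^{(n)}(x) = \int_0^1 (1-t) t^n f^{(n+2)}(tx)\, dt$ and bounding sup-norms on a compact $K$ by sup-norms of $f^{(n+2)}$ on the compact set $[0,1]\cdot K$, but the open mapping route is more compact.
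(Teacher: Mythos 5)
Your proof is correct and follows the same strategy as the paper: observe that $V$ is a closed, hence Fr\'echet, subspace of $\cinf{\bbR}$ and apply the Open Mapping Theorem to multiplication by $x^2$. The paper merely asserts that the bijectivity of this map is ``straightforward,'' whereas you supply the details (infinite-order vanishing at $0$ plus the Taylor integral remainder), which is a welcome elaboration but not a different route.
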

\begin{proof}
  The space of functions $V$ is a closed subspace of a Fr\'{e}chet space, and hence itself a Fr\'{e}chet space, and in particular complete and metrizable, so that the Open Mapping Theorem may be applied. Consequently we need only show that multiplication by $x^2$ is a continuous linear bijection, which is straightforward.
\end{proof}

\subsection{Principal Construction}

There is a fundamental construction which takes as input a $k$-simplex of field theories and gives as output a subdivided $k$-simplex of $\nqvect$. The construction satisfies properties so that roughly speaking one can think of it as an inverse to the map $F$ given in Definition~\ref{comparisonMap}. A number of choices will be made; the choices are homotopically insignificant. The idea of the construction is this. A family of field theories parameterized by a smooth manifold $S$ is locally a super vector bundle with an odd endomorphism together with a non-finitely generated non-projective sheaf of modules over the ring of functions (with an odd endomorphism). We ignore the unusual module sheaf and use the super vector bundle, along with the eigenspace decomposition coming from the odd endomorphism, to construct a filtered super vector bundle, which is the sort of thing one needs to construct simplices of $\nqvect$. Importantly, when summands of the unusual module vanish at a point they do so by going to $0$ in the spectrum of $A(1)$, by the continuity as shown in Lemma~\ref{perturb}.

Here is the precise definition of the data used to construct relevant simplices of $\nqvect$.

\begin{defn}
  A subdivision with resolvent data of a $k$-simplex of $\susyeft$ is the collection of data $\{\{(U_i,\mu_i)\},p,f,q\}$. The $U_i$ are open subsets of the standard $k$-simplex. Each $\mu_i$ is a positive real number less than $1$. The $p$ and $q$ are natural numbers, and $f$ is a set map from the collection of little simplices in a $p$-times subdivided $k$-simplex to the collection $\{i\}$ of indices for $U_i$ and $\mu_i$.  The data are subject to these conditions.
  \begin{enumerate}
  \item The $U_i$ constitute an open cover of the standard $k$-simplex.
  \item Each $\mu_i$ is such that at each point $x \in U_i$, the circle at the origin in $\bbC$ of radius $\mu_i$ does not intersect the spectrum of the endomorphism $A(1)_x$. In other words, all complex numbers of norm $\mu_i$ are in the resolvent of $A(1)_x$.
  \item A little simplex of the $p$-fold barycentric subdivision of the standard $k$-simplex lies entirely in at least one of the open sets $U_i$.
  \item The set map $f$ assigns to each little simplex an index $i$ in such a way that the little simplex lies within $U_i$. (In other words, if a little simplex lies in the intersection $U_i \cap U_j$ the map $f$ resolves the ambiguity of which index corresponds to this little simplex.)
  \item  The natural number $q \ge p$ is subject to the condition that for any index $I$ corresponding to $p$-fold subdivision, and $J$ any index corresponding to a $q$-fold subdivision, if $b_I(|\Delta^k|) \cap b_J(|\Delta^k|) \ne \varnothing$, then $b_J(|\Delta^k|) \subset U_{f(I)}$. This means that if some part of the boundary of one of the littlest simplices (after $q$ subdivisions) lies in one of the $U_i$ for a neighboring littlest simplex, then all of it must.   
  \end{enumerate}
\end{defn}

\begin{lemma}
  Every $k$-simplex of $\susyeft$ admits a subdivision with resolvent data.
\end{lemma}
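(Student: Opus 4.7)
The strategy is to construct each piece of data in turn, producing first the cover and resolvent radii from the continuity of the spectrum, then extracting $p$ via a Lebesgue number argument, and finally using compactness a second time to thicken the $p$-fold simplices and obtain $q$.

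For the cover and the $\mu_i$, fix a point $x \in |\Delta^k|$. The operator $A(1)_x$ has a finite spectrum in $\bbC^\times$ (recall that $A$ extends to the identity at $t=0$, so $0$ is not an eigenvalue of $A(1)_x$), so I can choose some $\mu_x \in (0,1)$ smaller than the distance from $0$ to $\mathrm{spec}(A(1)_x)$ and avoiding this spectrum. By Lemma~\ref{perturb} the spectrum varies continuously in the topology on $\mathcal{C}$; in particular, eigenvalues of $A(1)_y$ near $x$ either stay close to eigenvalues of $A(1)_x$ or split off from $0$, and in either case remain at distance bounded away from the circle of radius $\mu_x$ for $y$ in a small open neighborhood $U_x$. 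Compactness of $|\Delta^k|$ then lets me extract a finite subcover $\{U_i\}$ with associated $\mu_i$, which satisfies the first two conditions.

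Next, by the Lebesgue number lemma applied to the finite open cover $\{U_i\}$ of the compact space $|\Delta^k|$, there is some $\delta > 0$ such that every subset of diameter $< \delta$ is contained in some $U_i$. I choose $p$ large enough that each little simplex $b_I(|\Delta^k|)$ in the $p$-fold barycentric subdivision has diameter strictly less than $\delta$, and define $f(I)$ to be any index $i$ with $b_I(|\Delta^k|) \subset U_i$. This establishes conditions (3) and (4).

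For the final condition, for each $p$-fold index $I$ the set $b_I(|\Delta^k|)$ is compact and contained in the open set $U_{f(I)}$, so I can choose an open neighborhood $N_I$ of $b_I(|\Delta^k|)$ whose closure still lies in $U_{f(I)}$. Let
\[
\varepsilon := \min_I \operatorname{dist}\bigl(b_I(|\Delta^k|),\, |\Delta^k| \setminus N_I\bigr),
\]
which is strictly positive since each term is the distance between a compact set and a disjoint closed set, and there are only finitely many indices $I$. Choosing $q \ge p$ large enough that every $q$-fold little simplex $b_J(|\Delta^k|)$ has diameter less than $\varepsilon$ ensures that whenever $b_J(|\Delta^k|)$ meets $b_I(|\Delta^k|)$, the entire image $b_J(|\Delta^k|)$ lies inside $N_I \subset U_{f(I)}$, giving condition (5).

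The only nontrivial input is the continuous variation of the spectrum supplied by Lemma~\ref{perturb}; once that is in hand, everything else reduces to standard compactness and Lebesgue number arguments on the closed simplex $|\Delta^k|$, so I do not anticipate any real obstacle beyond bookkeeping.
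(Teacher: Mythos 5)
Your proof is correct and follows essentially the same route as the paper: Lemma~\ref{perturb} together with compactness of $|\Delta^k|$ produces the cover $\{U_i\}$ and the radii $\mu_i$, the Lebesgue number lemma yields $p$, the map $f$ is a free choice, and a second Lebesgue/compactness argument yields $q$. The paper's version is more terse but uses the same ingredients in the same order, so your added detail (the explicit open sets $N_I$ and the distance $\varepsilon$) is just a spelled-out form of what the paper leaves implicit.
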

\begin{proof}
  The existence of the finite open cover $U_i$ and the resolvent elements $\mu_i$ follow from Lemma~\ref{perturb} giving the continuity of the spectrum of $A(1)$, and the compactness of the standard $k$-simplex. Then the existence of a natural number $p$ satisfying the conditions follows from the Lebesgue Number Lemma. The map $f$ can be constructed freely; all that is necessary is that there is at least one such map. The existence of the natural number $q$ again follows from the Lebesgue Lemma.
\end{proof}

Because simplicial homotopy groups will be computed using the functorial fibrant replacement $Ex^\infty$ it is necessary to consider maps $sd^n \Delta^k \to \susyeft$ and not just individual $k$-simplices. This is not a problem, as recorded in the next lemma.

\begin{lemma}
When constructing subdivisions with resolvent data for the simplices determined by a map $sd^n \Delta^k \to \susyeft$, the extra data can be made compatible at the boundaries of simplices.   
\end{lemma}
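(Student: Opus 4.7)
The plan is to perform the construction of the previous lemma once and for all on the geometric realization $|sd^n \Delta^k|$ as a whole, rather than on each top-dimensional simplex separately; restricting the global data back to each top-dimensional $k$-simplex will then give data which is automatically compatible on shared faces.

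First, I would observe that a map $sd^n \Delta^k \to \susyeft$ specifies families of field theories on each top-dimensional $k$-simplex that agree (by pullback) on common faces. Consequently the operator $A(1)$, and in particular its spectrum as an element of the configuration space $\mathcal{C}$, is well defined at every point of $|sd^n \Delta^k|$ and varies continuously, since Lemma~\ref{perturb} gives continuity on each top-simplex and these local pictures are compatible on overlaps. The space $|sd^n \Delta^k|$ is compact.

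Next, I would repeat the proof of the preceding lemma at this global level. By the continuity of the spectrum and compactness of $|sd^n \Delta^k|$, I can cover $|sd^n \Delta^k|$ by finitely many open sets $U_i$, each equipped with a positive real $\mu_i < 1$ such that the circle of radius $\mu_i$ in $\bbC$ lies in the resolvent of $A(1)_x$ for every $x \in U_i$. Applying the Lebesgue Number Lemma to the cover $\{U_i\}$ of $|sd^n \Delta^k|$ yields a single natural number $p$ such that every little $k$-simplex in the $p$-fold barycentric subdivision of any top-dimensional simplex of $sd^n \Delta^k$ lies inside some $U_i$. A second application of the Lebesgue Lemma produces a single $q \ge p$ satisfying the corresponding condition globally. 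I would define $f$ once and for all by fixing a total order on the index set $\{i\}$ and sending each little simplex to the least index $i$ for which it is contained in $U_i$; this definition does not depend on a choice of containing top-simplex.

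Finally, for each top-dimensional $k$-simplex $\Delta$ of $sd^n \Delta^k$ the restricted data $\bigl(\{(U_i \cap \Delta,\mu_i)\},p,f|_\Delta,q\bigr)$ is a subdivision with resolvent data for $\Delta \to \susyeft$ in the sense of the previous definition. On a face shared by two adjacent top-simplices, the cover, the resolvent numbers, and the integers $p,q$ restrict to the same data, and the restrictions of $f$ agree on little simplices lying in the shared face because $f$ was defined globally via the fixed ordering. I do not anticipate a serious obstacle here; the only delicate point is verifying that little simplices on shared faces receive consistent labels under $f$, and the global minimal-index convention handles this cleanly.
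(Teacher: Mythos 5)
Your global strategy --- do the construction once on $|sd^n\Delta^k|$ and restrict --- is the right idea and is in the spirit of the paper's own (very terse) proof. The cover $\{U_i\}$, the resolvent radii $\mu_i$, and the parameters $p,q$ are all unambiguously global; that part is fine.

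The gap is in the last paragraph, in the claim that a globally-chosen $f$ automatically gives compatibility. Notice that the little $p$-fold simplices are \emph{top-dimensional} ($k$-dimensional) simplices: a little $p$-fold simplex of $\Delta$ and one of an adjacent $\Delta'$ are always distinct objects, meeting only in a lower-dimensional face. So the domains of $f|_\Delta$ and $f|_{\Delta'}$ are disjoint, and the phrase ``the restrictions of $f$ agree on little simplices lying in the shared face'' is vacuous --- there are no such little simplices. The compatibility that Construction~\ref{keyConstruction} actually requires is at the level of the derived quantities $\lambda_i$: for a vertex $v$ of $sd^{n+q}\Delta^k$ lying in a shared face, the number $\lambda_v = \max_I\{\mu_{f(I)}\mid v \in \mathrm{im}\,b_I\}$ is, as stated, a maximum over the little $p$-fold simplices $I$ \emph{of a single top-simplex}, and the set of such $I$'s depends on whether you regard $v$ as living in $\Delta$ or in $\Delta'$. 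A minimal-index convention does nothing to force these two maxima to coincide.

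The fix is easy but should be stated: run the maximum, and condition (5) of the definition, over \emph{all} little $p$-fold simplices of the ambient $sd^{n+p}\Delta^k$ rather than those of a single top-simplex. Then $\lambda_v$ is defined once for each vertex of $sd^{n+q}\Delta^k$, independently of any choice of containing top-simplex, and the filtrations $b_J^*({_{\lambda_0}V}) \subset \cdots \subset b_J^*({_{\lambda_k}V})$ glue automatically. The global version of condition (5) is again available by the Lebesgue Number Lemma on the compact space $|sd^n\Delta^k|$ (increase $q$ if necessary). With that adjustment the rest of your argument goes through.
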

\begin{proof}
  The spectrum of $A(1)$ is continuous. The maps $f$ can be constructed subject to constraints imposed by neighboring simplices. The subdivision parameters $p$ and $q$ can be increased without leading to any complication.
\end{proof}

Now we show what the above data give us. 

\begin{constr} \label{keyConstruction}
  From a $k$-simplex of $\susyeft$ and subdivision with resolvent data 
  \begin{equation*}
\{\{(U_i,\mu_i)\},p,f,q\}
\end{equation*}
 we construct a map of simplicial sets $sd^q \Delta^k \to \nqvect$.
\end{constr}

  Recall the notation $b_I$ and $b_J$, where $b_I$ denotes an inclusion $|\Delta^k| \to \bbA^k$ arising from $p$-fold barycentric subdivision, and $b_J$ a map associated with $q$-fold subdivision. The function $f$ assigns a resolvent radius $\mu_j$ to each index $I$ corresponding to a simplex of $sd^p \Delta^k$.

Let $J: \Delta^k \to sd^q \Delta^k$ be any such simplex. Recall that $k$-simplices of $\nqvect$ are given by inclusions of $k+1$ vector bundles and odd involutions on complementary subspaces. We define the sequence of vector bundles for $J$.  As $i$ ranges from $0$ to $k$ (seen as the vertices of $\Delta^k$), define a collection of positive real numbers $\lambda_i$ as follows.
\begin{equation*}
  \lambda_i := \underset{I}{\mathrm{max}}\{\mu_{f(I)}|J(i) \in \mathrm{im}(b_I) \}
\end{equation*}

\noindent
The maximum is taken over all indices $I$.

Observe that $i \mapsto \lambda_i$ is a decreasing function. This follows from the fact that for any simplex $\Delta^k\to sd \Delta^k$ of a subdivided simplex, as the vertices $i$ increase they are carried into the interior of increasingly higher dimensional faces of $sd \Delta^k$, and are thus incident with fewer neighboring simplices.

Denote by $_{\lambda_i}V$ the submodule of $V$ consisting of functions lying in the sum of eigenspaces of $A(1)$ whose eigenvalues have norm greater than $\lambda_i$. It is well-defined in some neighborhood of the compact set $b_J(|\Delta^k|)$. The module $b_J^*({_{\lambda_i}V})$ is a vector bundle over $|\Delta^k|$ which extends smoothly to a neighborhood of $|\Delta^k|$.

 A field theory supplies additional data useful for constructing the necessary involutions. The operator $A(1)$ gives an eigenspace decomposition $_{\lambda_{i+1}}V = {_{\lambda_{i+1}}V_{\lambda_{i}}} \oplus {_{\lambda_i}V}$. Here ${_{\lambda_{i+1}}V_{\lambda_{i}}}$ denotes the sum of eigenspaces whose eigenvalues have norm larger than $\lambda_{i+1}$ and smaller than $\lambda_i$. The infinitesimal generator of the family of field theories is diagonal with respect to this decomposition. On ${_{\lambda_{i+1}}V_{\lambda_{i}}}$ the infinitesimal generator $\diracD$ has no purely imaginary eigenvalues, since ${\lambda_{i}} < 1$ and $A(1) = e^{- \diracD^2}$. As a result, we can decompose ${_{\lambda_{i+1}}V_{\lambda_{i}}}$ further into
\begin{equation*}
  {_{\lambda_{i+1}}V_{\lambda_{i}}} = {_{\lambda_{i+1}}V_{\lambda_{i}}}^+ \oplus {_{\lambda_{i+1}}V_{\lambda_{i}}}^-
\end{equation*}
with the superscripts $+$ and $-$ denoting the sum of eigenspaces of the infinitesimal generator whose eigenvalues have positive (respectively negative) real part. They are of the same dimension since the infinitesimal generator is odd. With respect to this decomposition of ${_{\lambda_{i+1}}V_{\lambda_{i}}}$ define the odd involution $\alpha_{i+1} := Id \oplus -Id$.

We can now define a $k$-simplex of $\nqvect$ as the following sequence of vector bundles
\begin{equation*}
  b_J^*({_{\lambda_0}V}) \subset b_J^*({_{\lambda_1}V}) \subset \ldots \subset b_J^*({_{\lambda_k}V})
\end{equation*}
together with the involutions $ b_J^*\alpha_{i}$.
This concludes Construction~\ref{keyConstruction}.

\section{Equivalence}

We now show that the map $F$ defined in Section~\ref{sec:defineMap} is a homotopy equivalence. To do so we show that it induces isomorphisms on path components and all homotopy groups with all basepoints. Homotopy groups are determined simplicially after applying $Ex^\infty$ to obtain simplicial sets which are Kan.

In showing both injectivity and surjectivity we use Construction~\ref{keyConstruction}. The two parts of the proof have somewhat different flavors, though. Regarding injectivity --- since $\nqvect$ comes from a bisimplicial set there are some details to clarify about simplicial subdivisions. The key is Lemma~\ref{equivalenceOfSubdivisions}. A reader might wish to take for granted that the details about various subdivisions work out fine. On the surjectivity side --- again there is a bit to consider regarding subdivisions, but the more significant work is in the construction of a family of infinitesimal generators interpolating between an initial one and one lying in the image of $F$. This is accomplished in Equation (\ref{infgensubface}) in Proposition~\ref{surjectivity}.

We briefly recall a couple elements of simplicial homotopy. A $k$-simplex of $Ex^\infty X$ is a map $sd^n \Delta^k \to X$ with $n$ some natural number. Two such maps from subdivided simplices (possibly subdivided a different number of times) represent the same $k$-simplex of $Ex^\infty X$ if they are related by precomposition with $h: sd \Delta^k \to \Delta^k$, the last vertex map (see Definition~\ref{lastVertex}).

\subsection{Injectivity of induced map}
To prove that the map $F$ of Section~\ref{sec:defineMap} is injective on all homotopy groups for all basepoints, we apply Construction~\ref{keyConstruction} and take care of some combinatorial details.

Superdimension is preserved by $F$, and two vertices of $\susyeft$ are in the same component only if they have the same superdimension. Finally, two vector spaces of the same superdimension are connected (in $\nqvect$) by a zig-zag of their inclusions into their sum. Thus the map $F$ is injective on path components. Now we consider $\pi_k$ for $k \ge 1$.

\begin{prop}
  The induced map $F_* : \pi_k (\nqvect, E) \to \pi_k(\susyeft, E)$ is injective for all basepoints $E \in \nqvect_0$.
\end{prop}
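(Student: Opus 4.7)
The plan is to use Construction~\ref{keyConstruction} as a homotopy inverse to $F$ at the level of simplices. I would represent a class in $\pi_k(\nqvect, E)$ by a simplicial map $\phi : sd^n \Delta^k \to \nqvect$ sending $\partial sd^n\Delta^k$ to the basepoint $E$, and assume $F_*[\phi] = 0$ in $\pi_k(\susyeft, F(E))$. After increasing $n$ if needed, this hypothesis produces a null-homotopy $H : sd^m \Delta^{k+1} \to \susyeft$ whose zeroth face restricts to $F\circ\phi$ and whose remaining faces are constant at the field theory $F(E)$.

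Next I would choose subdivision with resolvent data for $H$, arranged so that on the zeroth face the data refine those canonically associated with a simplex in the image of $F$, and on the remaining faces they refine the trivial data at the basepoint. Applying Construction~\ref{keyConstruction} to $H$ then yields $\tilde H : sd^{m+q}\Delta^{k+1} \to \nqvect$, which is my candidate null-homotopy for $\phi$ in $Ex^\infty\nqvect$.

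Two checks then remain. First, on each face mapping to the constant basepoint the operator $A(1)$ is the identity of $E$, so every eigenvalue has norm $1$, lies strictly above every admissible $\lambda_i<1$, and the filtration produced by Construction~\ref{keyConstruction} collapses to the constant bundle $E$; the involutions are vacuous, giving the constant basepoint simplex of $\nqvect$. Second, on the zeroth face I would verify that Construction~\ref{keyConstruction} applied to $F(\phi)$ recovers $\phi$ up to subdivision and the last vertex map. This is essentially a direct computation using Definition~\ref{comparisonMap}: the decomposition $V_k \cong V_0 \oplus V_1' \oplus \ldots \oplus V_k'$ diagonalizes $\diracD$ into blocks of the form $\rho\bigl(\sum_{j\ge i} x_j\bigr)\alpha_i$, so the eigenspace decomposition of $A(1)=e^{-\diracD^2}$ refines exactly the filtration $V_0 \subset V_1 \subset \ldots \subset V_k$, while the $\pm$-eigenspaces of $\diracD$ on each summand $V_i'$ reproduce the involution $\sigma_i$ on the nose.

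The main obstacle is combinatorial: the identification on the zeroth face cannot be expected simplex-by-simplex but only after further subdivision and application of the last vertex map, which is precisely the content of the forthcoming Lemma~\ref{equivalenceOfSubdivisions}. Matching up the open cover $\{U_i\}$, the resolvent radii $\mu_i$, the indexing function $f$, and the subdivision parameters $p,q$ coherently across the boundary of $sd^m\Delta^{k+1}$ so that the pieces of $\tilde H$ glue into a simplicial map and so that its restriction to the distinguished face is last-vertex-equivalent to $\phi$ (rather than merely homotopic to it) is where the real work lies; granted that lemma, injectivity of $F_*$ on $\pi_k$ follows at once.
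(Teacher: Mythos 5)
Your proposal follows the same overall strategy as the paper: represent a class in the kernel of $F_*$, extract from the hypothesis a null-homotopy $H$ (or $\Gamma$) in $\susyeft$ filling the simplex $F\circ\phi$, apply Construction~\ref{keyConstruction} to $H$, and argue that the resulting map $\tilde H$ into $\nqvect$ is (after adjustment) a null-homotopy of $\phi$ in $Ex^\infty\nqvect$. You also correctly identify Lemma~\ref{equivalenceOfSubdivisions} as the tool that reconciles the output of the construction with the algebraic subdivision of $\phi$.

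Two remarks. First, your explicit verifications --- that on the constant faces the operator $A(1)=\mathrm{id}$ forces every $_{\lambda_i}V$ to be all of $V$, collapsing the filtration to the constant basepoint simplex with vacuous involutions, and that on the distinguished face the block-diagonal form of $\diracD = \bigoplus_j \rho(\cdot)\alpha_j$ makes the eigenspace decomposition of $A(1)=e^{-\diracD^2}$ recover the filtration $V_0\subset\dots\subset V_k$ and the sign-of-real-part involution recover each $\sigma_i$ --- are a useful elaboration of steps the paper leaves implicit. This is genuinely helpful unpacking of why Construction~\ref{keyConstruction} acts as a one-sided inverse to $F$.

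Second, your phrasing around Lemma~\ref{equivalenceOfSubdivisions} is slightly off. The lemma does \emph{not} say that the partially degenerate barycentric subdivision produced on the distinguished face is last-vertex-equivalent to $\phi$ ``rather than merely homotopic.'' It supplies precisely a natural simplicial \emph{homotopy} from the partially degenerate barycentric subdivision to the algebraic subdivision, and the paper's argument concatenates that homotopy with $\hat\Gamma$ to produce the null-homotopy of $\gamma$ in $Ex^\infty\nqvect$. So you should not expect an on-the-nose last-vertex identification; the extra homotopy is part of the final null-homotopy, not something to be eliminated. With that correction, your argument closes the same way the paper's does.
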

\begin{proof}
  Given a representative
  \begin{equation*}
\gamma: sd^m(\Delta^k, \partial \Delta^k) \to (\nqvect, E)
\end{equation*}
of an element of the $k$-th homotopy group of $\nqvect$ suppose that there is an $n \ge m$ and a $\Gamma$ as below such that 
\begin{equation*}
  \Gamma: sd^n(\Delta^{k+1},\Lambda^{k+1}_{k+1}) \to (\susyeft,E)
\end{equation*}
has $(k+1)$st face equal to $F \circ \gamma \circ h^{n-m}$. The existence of such a $\Gamma$ is equivalent to $\gamma$ representing an element in the kernel of $F_*$. Then applying Construction~\ref{keyConstruction} to $\Gamma$ with some choice of subdivision with resolvent data yields a map $\hat{\Gamma}$ as below.
\begin{equation*}
  \hat{\Gamma}: sd^q(\Delta^{k+1},\Lambda^{k+1}_{k+1}) \to (\nqvect, E)
\end{equation*}

The $(k+1)$-face of $\hat{\Gamma}$ is, in the language of Lemma~\ref{equivalenceOfSubdivisions}, a partially degenerate barycentric subdivision of $\gamma$, and so there is a natural homotopy between the algebraic subdivision of $\gamma$ and this face. This natural homotopy together with $\hat{\Gamma}$ yield a homotopy relative boundary between $\gamma$ (seen as a map to $Ex^\infty \nqvect$) and the constant map at $E$.
\end{proof}

\subsection{Surjectivity of induced map}
Once again we use Construction~\ref{keyConstruction}, but this time we need to construct a family of field theories. This amounts to specifying a family of vector spaces together with an infinitesimal generator of a supersemigroup of endomorphisms of the family.

\begin{prop} \label{surjectivity}
  The induced map $F_* : \pi_k (\nqvect,E) \to \pi_k(\susyeft,E)$ is surjective for all basepoints $E$.
\end{prop}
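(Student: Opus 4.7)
The plan is to take a representative $\gamma : sd^{n}(\Delta^{k},\partial\Delta^{k}) \to (\susyeft,E)$ of a class in $\pi_{k}(\susyeft,E)$, feed it to Construction~\ref{keyConstruction} with a compatible choice of subdivision with resolvent data, obtain $\hat{\gamma} : sd^{q}(\Delta^{k},\partial\Delta^{k}) \to (\nqvect,E)$, and then exhibit a $(k+1)$-simplex in $\susyeft$ whose $(k+1)$-st face is $F \circ \hat{\gamma}$ and whose remaining boundary identifies, after the last-vertex subdivision equivalence of Lemma~\ref{equivalenceOfSubdivisions}, with $\gamma$. This will show $F_{*}[\hat{\gamma}] = [\gamma]$ in $\pi_{k}(Ex^{\infty}\susyeft,E)$.

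The heart of the matter is to construct, on each little simplex $b_{J}(|\Delta^{k}|)$, a one-parameter family of infinitesimal generators interpolating between the generator $\diracD_{0}$ extracted from $\gamma$ (via Lemma~\ref{generatorforbundles} applied on the eigenbundle $_{\lambda_{k}}V$ produced by Construction~\ref{keyConstruction}) and the prescribed generator $\diracD_{1}$ of $F \circ \hat{\gamma}$ given in Definition~\ref{comparisonMap}. Both $\diracD_{0}$ and $\diracD_{1}$ are block-diagonal with respect to the filtration $_{\lambda_{0}}V \subset \cdots \subset _{\lambda_{k}}V$: the filtration is defined from the spectrum of $A(1) = e^{-\diracD_{0}^{2}}$, so $\diracD_{0}$ preserves each annular block $_{\lambda_{i+1}}V_{\lambda_{i}}$, while Construction~\ref{keyConstruction} refines this block into the $\pm$-real-part eigenspaces of $\diracD_{0}$ on which $\diracD_{1}$ is exactly $\pm\rho(\sum_{j \ge i+1} x_{j})$. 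I would then set
\begin{equation*}
\diracD_{t} := (1-t)\,\diracD_{0} + t\,\diracD_{1}
\end{equation*}
block by block, with $t$ a new coordinate providing the extra simplex direction. For each $t$ this is a smooth odd endomorphism of the bundle, so by Lemma~\ref{generatorforbundles} it exponentiates to a super semigroup fitting Definition~\ref{quadruples}; smoothness of trace and supertrace of $e^{-\diracD_{t}^{2}}$ is inherited from smoothness on the finite-rank bundle, and the identity extension of condition~(\ref{identityCondition}) is automatic from oddness of $\diracD_{t}$.

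The main obstacle is gluing and boundary bookkeeping. Across faces shared by neighboring little simplices, the parameter $q$ in the subdivision with resolvent data was precisely arranged so that the filtrations $_{\lambda_{i}}V$ agree on overlaps; since the $\pm$-splittings of each annular block are determined intrinsically by $\diracD_{0}$, the blockwise interpolations fit together into a single smooth generator on $sd^{q}\Delta^{k} \times \bbA^{1}$, which determines a $(k+1)$-simplex of $\susyeft$. On $\partial\Delta^{k}$ the original $\gamma$ is constant at the topological basepoint $E$, so $\diracD_{0} = 0 = \diracD_{1}$ there and the interpolation is likewise constant at $E$. I would conclude by checking that at $t = 1$ this $(k+1)$-simplex is exactly $F \circ \hat{\gamma}$, while at $t = 0$ it is a partially degenerate barycentric subdivision of $\gamma$, which by Lemma~\ref{equivalenceOfSubdivisions} is joined to $\gamma$ by a natural homotopy in $Ex^{\infty}\susyeft$. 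The most delicate point to verify is that the non-projective vanishing conditions on $V$ built into Definition~\ref{comparisonMap} are preserved throughout the interpolation; this works because at the faces of $\Delta^{k}$ where the weights $\rho(\sum x_{j})$ blow up, the $\diracD_{0}$ contribution vanishes (as $\gamma$ is constant at $E$ there), so $\diracD_{t}$ reduces to $t\,\diracD_{1}$ and the prescribed vanishing is inherited from the $F$-side.
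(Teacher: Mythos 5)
Your overall scaffolding matches the paper's: apply Construction~\ref{keyConstruction} to $\gamma$ to get $\hat\gamma$, then build a concordance between $\gamma$ and $F\circ\hat\gamma$ block-by-block along the spectral filtration, and finally invoke Lemma~\ref{equivalenceOfSubdivisions} for the subdivision bookkeeping. However, the central formula is not the paper's and it does not work. The paper does not linearly interpolate the generators; it interpolates the \emph{inverses}, setting
\[
\diracD_s^{-1} := s^2\diracD^{-1} + \phi(s)\frac{\alpha_i}{\rho(x_i+\cdots+x_k)}.
\]
The point of interpolating the resolvent is that $\diracD_s^{-1}$ vanishes \emph{only} at the single corner $x_i=\cdots=x_k=s=0$, so $\diracD_s$ is a bounded (indeed invertible) operator everywhere else. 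Your formula $\diracD_t = (1-t)\diracD_0 + t\,\rho(\cdot)\alpha_i$ instead blows up along the entire codimension-one locus $x_i+\cdots+x_k=0$, for every $t>0$. For $0 < t < 1$ the module on the block $_{\lambda_i}V_{\lambda_{i-1}}$ is a full-rank bundle with no vanishing conditions imposed, so an operator that becomes infinite there is not a continuous endomorphism of the module, the trace and supertrace of $e^{-\diracD_t^2}$ are not smooth, and the family is not a valid family of field theories in the sense of Definition~\ref{quadruples}. Passing to inverses is not a technical nicety; it is the mechanism that confines the blow-up to the single point where the module actually dies.

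Your proposed rescue is also off-base: you argue that where $\rho$ blows up "$\diracD_0$ vanishes since $\gamma$ is constant at $E$ there." But $\gamma$ is constant at $E$ only on $\partial\Delta^k$ of the outer simplex; the face $x_i+\cdots+x_k=0$ is a face of a \emph{little} simplex $b_J(|\Delta^k|)$ deep in the interior, precisely where the rank of the filtration drops, and $\gamma$ is in general nontrivial there. You are conflating two different boundaries. In addition, your argument does not treat the non-projective summand $V_{\lambda_k}$ at all (your parenthetical cites Lemma~\ref{generatorforbundles} on $_{\lambda_k}V$, which is the bundle part; the troublesome piece is the complementary $V_{\lambda_k}$, which may fail to be a bundle). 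The paper handles $V_{\lambda_k}$ and $_{\lambda_0}V$ separately by reparameterizing time in the semigroup, $A_s(t)=A(t/s^4)$ and $A_s(t)=A(s^4t)$ respectively, rather than by acting on a generator, since on those summands Lemma~\ref{generatorforbundles} need not apply.
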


\begin{proof}

  Applying Construction~\ref{keyConstruction} to a $\gamma$ representing an element of $\pi_k(\susyeft,E)$ (with some choice of subdivision with resolvent data) yields a map $\hat{\gamma}: sd^n\Delta^k \to \nqvect$ and then applying to this the map $F$, we obtain a subdivided simplex of $\susyeft$. We must produce a simplicial homotopy relative boundary connecting $\gamma$ and $F \circ \hat{\gamma}$, viewed as maps to $Ex^\infty \susyeft$. Without loss of generality we restrict attention to the case of beginning with a single $k$-simplex of $\susyeft$. All constructions will be sufficiently natural with respect to inclusions of faces.

The $k$-simplex of field theories involves a module $V$ and semigroup of operators with infinitesimal generator $\diracD$.
  
  We can pick a subdivision with resolvent data $\{\{(U_i,\mu_i)\},p,f,q\}$ for the given $k$-simplex. We then restrict our attention to one of the little $k$-simplices (after $q$-fold subdivision). On such a simplex we can decompose the module $V$ as 
\begin{equation*}
V = V_{\lambda_k} \bigoplus {_{\lambda_k}V_{\lambda_{k-1}}} \bigoplus {_{\lambda_{k-1}}V_{\lambda_{k-2}}} \bigoplus \ldots \bigoplus {_{\lambda_1}V_{\lambda_0}} \bigoplus {_{\lambda_0}V}
\end{equation*}
with the notation for the modules as before, meaning that ${_{\lambda_j}V_{\lambda_{j-1}}}$ is the subbundle of $V$ given by generalized eigenspaces of $A(1)$ corresponding to eigenvalues with norm between $\lambda_j$ and $\lambda_{j-1}$ (the $\lambda_i$ are defined by looking at the maximum of all relevant $\mu_j$).  Each of the summands is a vector bundle on a neighborhood of $|\Delta^k|$ except, possibly, for $V_{\lambda_k}$.

  Let $s$ be the homotopy/concordance parameter, i.e. the coordinate on $\bbA^1$. First, we define the module for the family of field theories. It consists of functions on $\bbA^k \times \bbA^1$ which lie in 
\begin{equation*}
{_{\lambda_k}V_{\lambda_{k-1}}} \bigoplus {_{\lambda_{k-1}}V_{\lambda_{k-2}}} \bigoplus \ldots \bigoplus {_{\lambda_1}V_{\lambda_0}} \bigoplus {_{\lambda_0}V}
\end{equation*}
\noindent
for $s \le 0$, and which lie in $V$ for $s > 0$. Observe that at $s = 0$ the non-vector bundle summand is missing.

We also require that for any function in $V$, the component which lies in $_{\lambda_i}V_{\lambda_{i-1}}$ vanishes to all orders at points $x_i=x_{i+1}=x_{i+2}=\ldots=x_k=s=0$.

Note that at $s=0$ the prescribed vanishing leads to a module that looks like something in the image of $F$.

We now define the family of endomorphisms on each summand separately.

On $_{\lambda_0}V$ define $A_s := A(s^4t)$ and $B_s := s^2B(s^4t)$. It is clear that these definitions give continuous endomorphisms. Moreover, they satisfy the super semigroup relations given in Section~\ref{sec:ssg} for all $s$. At $s = 0$ we get a topological theory.

On $V_{\lambda_k}$ define $A_s(t) := A(\frac{t}{s^4})$ and $B_s(t) := \frac{1}{s^2}B(\frac{t}{s^4})$. This means we speed up time using the homotopy parameter, and thereby wash out the non-vector bundle summand of $V$. This gives a continuous endomorphism, and satisfies the super semigroup relations. The only place we need to check continuity is at $s=0$, since otherwise we simply invoke the continuity of the original family $A(t)$. At any point in the parameter space $|e^{- \diracD ^2}| < \lambda_k < 1$ and so we see that the operator is near zero as $s$ gets small.

Finally, we treat $_{\lambda_{i}}V_{\lambda_{i-1}}$. On this summand, the difficulty in defining the endomorphisms is at the face consisting of those points for which $x_i = x_{i+1}=x_{i+2}=\ldots=x_k=0$. On this face, as $s$ approaches $0$, the infinitesimal generator of the semigroup must become infinite. Because this summand is a vector bundle in the module defining the original family of field theories, we may work directly with the infinitesimal generator of the semigroup, thanks to Lemma~\ref{generatorforbundles}.

Denote by $\diracD$ this infinitesimal generator. Our goal is to define a family of infinitesimal generators $\diracD_s$ with $\diracD_1 = \diracD$, and $\diracD_0$ being the generator which comes from the simplex in the image of $F$.

In order for the supertrace $str (e^{-t \diracD_s^2})$ to give a smooth function on the parameter space, it is necessary that $\diracD_s$ becomes infinite when summands of the module $V$ are `turned off.' Moreover, the operator must become infinite in a constrained way. It is not enough for the spectrum to move to $\infty$ in the Riemann sphere. Instead, because the spectrum of $A(1)$ varies continuously, the spectrum of $\diracD_s$ must grow large while remaining in the portion of the complex plane given by $|Im(z)| < |Re(z)|$; i.e. the portion consisting of complex numbers whose squares have positive real part.

In other words, we seek an operator $\diracD_s^{-1}$ which vanishes at $x_i = x_{i+1}=x_{i+2}=\ldots=x_k=s=0$ and which, near that subset, has a spectrum whose elements when squared have positive real part. 

We gather some useful items in order to define the desired operator. Define a new function denoted $\phi(s)$, a smooth step function satisfying $\phi(0)=1$ and $\phi(1) = 0$. Recall the operator $\alpha_i$, which arose in Lemma~\ref{keyConstruction}. This is the odd involution defined by the fact that the spectrum of $\diracD$ has no imaginary eigenvalues, since we are working only with $_{\lambda_{i}}V_{\lambda_{i-1}}$, on which $A(1) = e^{-\diracD^2}$ has eigenvalues of norm less than $\lambda_{i-1} < 1$. Specifically, $\alpha_i$ is defined as multiplication by $\mathrm{sign}(\mathrm{Re}(\mu))$ on generalized eigenvectors $v$ with eigenvalue $\mu $. Recall also the function of a single real variable $\rho(x)$, used in the definition of the map $F:\nqvect \to \susyeft$. This $\rho(x)$ is equal to $\frac{1}{x^2}$ for $x$ near $0$ and identically $0$ for $x$ near $1$. We use homogeneous coordinates $(x_0,x_1,\ldots,x_k)$ on $|\Delta^k|$ which sum to $1$.

Now we define $\diracD_s$ by defining its inverse and checking that it satisfies the spectral requirements enumerated earlier.

\begin{equation}
\label{infgensubface}
  \diracD_s^{-1} :=  s^2\diracD^{-1} + \phi(s)\frac{\alpha_i}{\rho(x_i + x_{i+1} + \ldots + x_k)}
\end{equation}

This operator vanishes exactly at $x_i = x_{i+1}=x_{i+2}=\ldots=x_k=s=0$. It certainly vanishes there, since both terms in the sum do. That it vanishes nowhere else can be seen in the following way. For any positive real numbers $a$ and $b$, the linear combination $a\diracD^{-1} + b \alpha_i$ is invertible. This is so since, if $\lambda$ is a complex number with positive real part, then $a\lambda + b$ is non-zero, and similarly if $\lambda$ has negative real part, then $a\lambda - b$ is non-zero. According to the description of the relation between $\alpha_i$ and the spectrum of $\diracD$, this proves the requisite invertibility. Finally, similar considerations show that $\diracD_s^{-1}$ has spectrum whose elements square to have positive real part.

The operator is defined for $s < 0$ as well as $x_i<0$, $x_{i+1}<0$, etc and is invertible there. This means that we can define the family in a neighborhood of the compact standard $k$-simplex
The expression (\ref{infgensubface}) clearly takes the correct form at $s=0,1$ in a neighborhood of $x_i = x_{i+1} = \ldots = x_k = 0$, so that it is a candidate for producing the necessary concordance between the initial family of field theories and the family in the image of $F$. The operator $\diracD_s$ is in fact a continuous endomorphism of the module $V$ on $|\Delta^k| \times \bbA$ defined above. The proof is as in Lemma~\ref{continuousInfGen}, where we apply the Open Mapping Theorem.

We now have a smooth infinitesimal generator defined on $|\Delta^k| \times \{0,1\} \cup \mathit{U}$, where $\mathit{U}$ is a neighborhood of the face on which the definition of Equation~\ref{infgensubface} restricts to the correct operators at $s=0,1$. This map can be smoothly extended to the whole of $|\Delta^k| \times |\Delta^1|$. Choosing a trivialization of the vector bundle, we identify a family of endomorphisms with a smooth map to $M_{n\times n}(\bbC)$. The partially-defined smooth infinitesimal generator then admits a smooth extension.

When defining the homotopy separately on each simplex of $sd^m\Delta^k$ we have not ensured that the families of infinitesimal generators are equal when restricted to shared faces. In order to construct a homotopy of maps $sd^m\Delta^k \to \susyeft$ we need to ensure that the homotopies we construct agree on these shared faces. This is always possible to do.

The homotopy constructed here is not precisely one between $(V,W,L,R)$ and an associated family arising from a simplex in $\nqvect$, since in passing to $\nqvect$ we forget about $W$. This is no complication, though. By Lemma~\ref{LPerfectPairing} we know that $L$ gives an isomorphism $W \cong \dual{V}$. An isomorphism between modules defining isomorphic families of field theories can be implemented by a smooth path of families of field theories. In the case that we are talking single field theories, i.e. families parameterized by a point, this is just the statement that the Grassmannian is path connected. 

That the homotopy constructed here is a homotopy relative boundary follows from the fact that the modification of the semigroups used to define the homotopy leaves field theories that are already topological unchanged. Since they have $A(t) = id$ and $B(t) = 0$ for all $t$, nothing happens to them during the reparameterization using $s$.

Lemma~\ref{equivalenceOfSubdivisions} gives a natural homotopy between the algebraic subdivision of $\gamma$ and the version of $\gamma$ involving barycentric subdivision involved in the homotopy just constructed.

This establishes the requisite homotopy relative boundary between $\gamma$ and $F \circ \hat{\gamma}$ as maps $(\Delta^k, \partial \Delta^k) \to (Ex^\infty \susyeft, E)$.
\end{proof}

\subsection{Conclusion}

Since the map $F:\nqvect \to \susyeft$ is an equivalence, and the homotopy type of the domain is known, we have determined the homotopy type of the space of field theories and proved Theorem~\ref{mainTheorem}. We remark briefly on changes necessary for the real case. By endowing $H$ with a real structure we can speak of real field theories and produce a space $\nqvect$ with homotopy type $BO \times \mathbb{Z}$. We can further require that $L$, $R$, and their composition are compatible with real structure present in $V$ and $W$. In this case, in Construction~\ref{keyConstruction} the summands $_{\lambda_{i+1}}V_{\lambda_i}$ in the eigenspace decomposition obtain real structures by restriction. The odd involutions are defined by the sign of the real part of the eigenvalue, which is unchanged by the action of the real structure, and thus the odd involution is also an involution on the real bundle by restriction.

\appendix

\section{Simplicial Sets}

For simplicial homotopy see \cite{GoerssJardine}. The simplicial set $\Delta^k$ is the nerve of the partially ordered set of natural numbers from $0$ to $k$ ordered by `greater than.' The simplicial set $sd \Delta^k$ is the nerve of the partially ordered set of subsets of $\{0,1,\ldots,k\}$, ordered by inclusion. By thinking of these simplicial sets in this way we define the last vertex map, necessary for treating endofunctors subdivision $sd$ and its right adjoint $Ex$ of the category of simplicial sets.

\begin{defn} \label{lastVertex}
  The last vertex map $h: sd \Delta^k \to \Delta^k$ sends a vertex $\{v_0,v_1,\ldots, v_j\}$ to its largest element.
\end{defn}

The $k$-simplices of $Ex X$ correspond to simplicial set maps from $sd \Delta^k$ to $X$. Applying $Ex$ repeatedly to an arbitrary simplicial set yields a simplicial set which is Kan and equivalent to the original. Hence $Ex^\infty$ is a functorial fibrant replacement.

We use $\bbA^k$ to denote the subset of $\bbR^{k+1}$ consisting of $(k+1)$-tuples $(x_0,x_1,\ldots,x_k)$ satisfying $\sum x_i = 1$.

The embedding $\Delta \to \Man$ is by $[k] \mapsto \bbA^k$, with coface and codegeneracy maps given by insertion of $0$ and summation of adjacent coordinates, respectively.

The non-degenerate $k$-simplices of $sd \Delta^k$ can be enumerated by bijections $\sigma:\{0,1,\ldots,k\} \to \{0,1,\ldots,k\}$ in the following way. The non-degenerate $k$-simplices of $sd \Delta^k$ are determined by increasing sequences of subsets $\{m_0\} \subset \{m_0,m_1\} \subset \ldots \subset \{m_0,m_1,\ldots,m_k\}$. Assign to such a sequence the permutation $\sigma: i \mapsto m_i$. This establishes a bijection between the non-degenerate $k$-simplices of $sd \Delta^k$ and elements of the group of permutations on $k+1$ letters.

Let $v_i$ be the point in $\bbR^{k+1}$ whose $i$-th coordinate is $1$ and all others are zero.

\begin{defn}
  \label{barycentricMaps} 
  Having fixed a permutation $\sigma \in S_{k+1}$, define an affine linear barycentric subdivision map $b_\sigma:\bbA^k \to \bbA^k$ by the following formula.
\begin{equation*}
  b_\sigma: v_i \mapsto \frac{1}{i+1}\left(v_{\sigma(0)} + v_{\sigma(1)} + v_{\sigma(2)} + \ldots + v_{\sigma(i)}\right)  
\end{equation*}
  
\end{defn}

A simplicial generalized manifold defines a simplicial set by precomposition with $\Delta \to \Delta \times \Delta \to \Delta \times \Man$, with the first map the diagonal and the second the product of the identity and the embedding $[k] \mapsto \bbA^k$.  Let $X$ denote both a simplicial generalized manifold and its corresponding simplicial set. 

The vertices of $sd \Delta^k$ are naturally identified with subsets $\{0,1,\ldots,k\}$. Call a map $f: sd^q \Delta^k \to \Delta^k$ face-preserving if the image under $f$ of each vertex $v$ of $sd^q \Delta^k$ is contained in the face corresponding to the subset $S_v \subset \{0,1,\ldots,k\}$ to which $v$ is taken by the $(q-1)$-fold application of the last vertex map. The point of defining face-preserving maps is that the face-preservation condition will ensure that a certain simplicial homotopy can be constructed in a specified direction.

A $k$-simplex $x$ of $X$ is an element of $X_{([k],\bbA^k)}$. Let $f: sd^q \Delta^k \to \Delta^k$ be any face-preserving map.

\begin{defn} \label{partiallyDegenDefn}
The partially degenerate barycentric subdivision of $x$ associated to $f$ is given by pulling $x$ back along the product of $f$ with the iterated barycentric subdivision maps.
\end{defn}

This partially degenerate barycentric subdivision is a map of simplicial sets $sd^q \Delta^k \to X$. In other words, it consists of $((k+1)!)^q$ elements of $X_{([k],\bbA^k)}$ whose various faces are equal in the necessary way. Observe that this subdivision is different than the purely algebraic subdivision defined in terms of precomposition with the last vertex map after taking the diagonal. Hence we need to relate the two.

\begin{lemma} \label{equivalenceOfSubdivisions}
  Let $f: sd^q \Delta^k \to \Delta^k$ be any face-preserving map, and $x:\Delta^k \to X$ be a $k$-simplex of the diagonal of the (restriction of a) simplicial generalized manifold. There is a natural simplicial homotopy from the partially degenerate barycentric subdivision associated to $f$ to the algebraic subdivision of $x$, i.e. the restriction of the homotopy to any face depends only on the restriction of $f$ to that face.
\end{lemma}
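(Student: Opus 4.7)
The plan is to construct a simplicial homotopy
\[
H: sd^q \Delta^k \times \Delta^1 \to X
\]
by specifying, for each non-degenerate simplex of the prism $sd^q \Delta^k \times \Delta^1$, a single bisimplex of $X$ obtained as a pullback of the given $k$-simplex $x \in X_{([k], \bbA^k)}$. An $(m+1)$-simplex of the prism yields an element of $X_{([m+1], \bbA^{m+1})}$ once we exhibit a pair $(\alpha, \beta)$ consisting of an order-preserving $\alpha: [m+1] \to [k]$ and a smooth map $\beta: \bbA^{m+1} \to \bbA^k$, along which $x$ is to be pulled back. The combinatorics of $f$ and $h^q$ will determine $\alpha$ and the barycentric subdivision maps will determine $\beta$.

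For a non-degenerate $m$-simplex $J'$ of $sd^q \Delta^k$ with vertex chain $v_0 < v_1 < \cdots < v_m$ (each $v_i$ identified via the iterated last vertex map with a subset $S_{v_i} \subseteq \{0, \ldots, k\}$), the prism $J' \times \Delta^1$ has $m+1$ non-degenerate $(m+1)$-simplices, indexed by a jump point $j \in \{0, 1, \ldots, m\}$; the $j$-th has ordered vertex sequence $(v_0, 0), \ldots, (v_j, 0), (v_j, 1), \ldots, (v_m, 1)$. On this simplex take $\alpha$ to send the $i$-th vertex to $f(v_i)$ for $i \le j$ and to $\max S_{v_{i-1}} = h^q(v_{i-1})$ for $i > j$, and take $\beta$ to send the $i$-th standard affine vertex to the realization of $v_i$ in $\bbA^k$ for $i \le j$ and to the corner $v_{\max S_{v_{i-1}}} \in \bbA^k$ for $i > j$, extended affinely.

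The crucial verification is that $\alpha$ is weakly monotone: the initial segment $f(v_0) \le \cdots \le f(v_j)$ is monotone because $f$ is simplicial, the terminal segment $\max S_{v_j} \le \cdots \le \max S_{v_m}$ is monotone because $S_{v_j} \subseteq \cdots \subseteq S_{v_m}$, and the linking inequality $f(v_j) \le \max S_{v_j}$ is exactly the face-preserving hypothesis $f(v_j) \in S_{v_j}$. This is the heart of the argument and the single place where the face-preserving condition is used. One then checks that adjacent prism simplices (jumps $j$ and $j+1$) share a face on which the two definitions of $(\alpha, \beta)$ agree (both reducing to the configuration that omits the central pair of stacked vertices), that restriction to $sd^q \Delta^k \times \{0\}$ recovers the partially degenerate barycentric subdivision and restriction to $sd^q \Delta^k \times \{1\}$ recovers the algebraic subdivision, and that compatibility with face maps of $J'$ inside $sd^q \Delta^k$ is immediate because $f$, $h^q$, and the barycentric realizations are themselves simplicial. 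Naturality with respect to faces of $\Delta^k$ is clear because the entire construction is strictly local in $J'$ and uses $f$ only through its restriction to that simplex; the sole nontrivial obstacle is the monotonicity check for $\alpha$, which is resolved precisely by the face-preserving hypothesis on $f$.
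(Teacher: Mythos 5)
Your proof is correct and follows essentially the same strategy as the paper's: decompose the prism $sd^q\Delta^k \times \Delta^1$ into the non-degenerate simplices indexed by a jump point $j$; on each, assign an element of the bisimplicial set by pulling $x$ back along a pair (order-preserving map to $[k]$, smooth map to $\bbA^k$); and use the face-preserving hypothesis precisely to get the weak monotonicity of the interpolated order-preserving map $\alpha$. The one superficial difference is on the smooth side: the paper explicitly interpolates linearly in the homotopy parameter between the barycentric and algebraic realization maps (so its $H\circ g_j$ is quadratic in the barycentric coordinates), whereas you take the affine map determined by the vertex values $b(v_i)$ and $a(v_{i-1})$; both have the same vertex values and both patch correctly across shared faces, so both are valid choices. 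You also write the argument for general $m$-simplices of $sd^q\Delta^k$ rather than only the top-dimensional ones, which makes the adjacency and naturality verifications explicit where the paper leaves them implicit.
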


\begin{proof}
Fix a non-degenerate $k$-simplex $y$ of $sd^q \Delta^k$. The iterated barycentric subdivision, restricted to this simplex, corresponds to (precomposition with) one map $\bbA^k \to \bbA^k$ and the algebraic subdivision corresponds to another map $\bbA^k \to \bbA^k$. Interpolate linearly between these two to obtain a map $H:\bbA^{k+1} \cong \bbA^k \times \bbA^1 \to \bbA^k$.

There are $k+1$ non-degenerate $(k+1)$-simplices of $\Delta^k \times \Delta^1$. For $j$ ranging from $1$ to $k+1$, define maps $g_j: \bbA^{k+1} \to \bbA^{k+1} \cong \bbA^{k} \times \bbA^1$ by 

\begin{equation} 
 g_j(v_i) =
 \begin{cases}
    (v_i,v_1) & i \le j\\
     (v_{i-1},v_2)& i > j
  \end{cases}
\end{equation}

In the expression above, $v_i$ denotes the vector $(\delta_i^j)$ in $\mathbb{R}^{k+2}$, as it did earlier.

Compose $g_j$ and $H$ to pull back elements of $X_{([k],\bbA^k)}$ to elements of $X_{([k],\bbA^{k+1})}$. These elements are now modified by the face-preserving map $f$. For each $j$ produce a map $[k+1] \to [k]$ as follows. The given $f$, restricted to $y$, corresponds to some $f_{|y}:[k] \to [k]$, and similarly the iterated composition $h^q$ of the last vertex map yields a $h^q_{|y} : [k] \to [k]$. Produce a map $[k+1] \to [k]$ by the following.

\begin{equation}
i \mapsto
  \begin{cases}
    f_{|y}(i),&i \le j\\
    h^q_{|y}(i-1)& i > j
  \end{cases}
\end{equation}

Here, the fact that $f$ is face-preserving ensures that the map thus defined is order preserving. Pulling back along these maps gives elements of $X_{([k+1],\bbA^{k+1})}$, which is to say $(k+1)$-simplices of $X$, and by construction they together constitute a homotopy from the barycentric subdivision (restricted to $y$) to the restricted algebraic subdivision (restricted to $y$). Performing the construction on each non-degenerate $k$-simplex of $sd^q \Delta^k$ gives the homotopy. 
\end{proof}

\section{The Euclidean Bordism Category} \label{sec:bordismAppendix}

This appendix sketches a category of oriented, 1-dimensional, supersymmetric  Euclidean bordisms.

We begin with Euclidean isometries. Let $\AUT{\bbR^{1|1}}$ be the functor which assigns to each supermanifold $S$ the set of invertible elements in $\SMan(S \times \bbR^{1|1},\bbR^{1|1})$. Invertible here means that there exists another such element so that the composition of maps over $S$ gives the identity on $S \times \bbR^{1|1}$. We define a subgroup of $\AUT{\bbR^{1|1}}$. This will be the Euclidean group of isometries of $\bbR^{1|1}$. Note that given an $S$-point of $\bbR^{1|1}$, we can obtain an invertible element in $\AUT{\bbR^{1|1}}$ by translation as follows. 
\[
\mu \circ (f \circ p_1 \times id_{\roo} \circ p_2) : S \times \roo \to \roo \times \roo \to \roo
\]
\noindent
Then the $S$-point $(s,\theta)$ is invertible with inverse $(-s, -\theta)$.

There are also reflections. The assignment $(s,\theta) \mapsto (s, -\theta)$ gives an action of $\mathbb{Z}/2$ on $\roo$. This action in fact gives an automorphism of $\roo$ as a super Lie group. We define the Euclidean isometries $\mathrm{Iso}(\roo)$ of $\roo$ to be this semi-direct product $\roo \rtimes \mathbb{Z}/2$.

Having established the relevant isometry group we may now define the objects and morphisms of the bordism category $\susyebord$. The category we define will be closely modeled on the bordism category of \cite[Section 6]{HST}. Two important differences between this bordism category and that of \cite{HST} are that we add orientations and that we do not require that the reduced part of a family of bordisms forms a fiber bundle over the parameter space. This latter condition is what is called `positive' in \cite{HST}.

First, we define what it means to have a family of oriented Euclidean supermanifolds.

\begin{defn}
  A family of oriented Euclidean $1|1$-manifolds over a supermanifold $S$ is
  \begin{itemize}
  \item A map of supermanifolds $p:Y \to S$
  \item An atlas of open sets ${\mathit{U}_i}$ on $Y_{\mathrm{red}}$ and isomorphisms 
    \begin{equation*}
      Y_{|\mathit{U}_i} \cong \mathit{V}_i \subset S \times \bbR^{1|1}
    \end{equation*} 
    compatible with the projection map, such that the transition maps are given by maps  $p_1(\mathit{V}_{ij}) \to \mathrm{Iso}(\bbR^{1|1})$
  \item An orientation on the reduced fibers of $p:Y \to S$
  \end{itemize}
\end{defn}
\noindent

A family of Euclidean $0|1$-manifolds is defined similarly, but with $\mathrm{Iso}(\bbR^{0|1}) \cong \mathbb{Z}/2$.

Now we define an object of $\susyebord$ over a supermanifold $S$.

\begin{defn}
  An object of $\susyebord$ over the supermanifold $S$ is
  \begin{itemize}
  \item A family of oriented $1|1$-Euclidean supermanifolds $Y$ over $S$
  \item A family of $0|1$-Euclidean supermanifolds $Y^c$ over $S$, referred to as the core
  \item An isometric embedding $Y^c \to Y$ over $S$
  \item A decomposition $Y^\pm$ of $Y_{\mathrm{red}} \setminus Y_{\mathrm{red}}^c$ into the union of two open subsets, both containing $Y^c_{\mathrm{red}}$ in their closure
  \end{itemize}
  
  These data must satisfy the condition that the map of reduced manifolds from $Y^c_{\mathrm{red}} $ to  $S_{\mathrm{red}}$ is proper.
\end{defn}

A morphism of $\susyebord$ over a supermanifold $S$ is defined as an equivalence class. Representatives have the following form.

\begin{defn} \label{morphismRepresentatives}
  A morphism of $\susyebord$ over the supermanifold $S$ is represented by
  \begin{itemize}
  \item A family of oriented $1|1$-Euclidean supermanifolds $\Sigma$ over $S$
  \item A pair of objects $Y_0$ and $Y_1$ over $S$ (the source and target, respectively)
  \item Neighborhoods $W_0$ and $W_1$ of the cores $Y^c_0$ and $Y_1^c$
  \item Isometric embeddings $W_0 \to \Sigma$ and $W_1 \to \Sigma$ over $S$
  \end{itemize}
  The data must satisfy three conditions. First, the core of $\Sigma$, denoted $\Sigma^c$ and defined as $\Sigma_{\mathrm{red}} \setminus \{W_{0,red}^+ \bigcup W_{1,red}^-\}$ is such that the map $\Sigma^c \to S_{\mathrm{red}}$ is proper. Here $W_{i,red}^\pm$ refers to the decomposition of $W_{i,red} \setminus Y^c_{i,red}$ induced by the decomposition $Y_i^\pm$  associated with $Y_i$. Second, the image of the reduced part of the map $W^+_1 \to \Sigma$ is contained in the core $\Sigma^c$. Third, the maps $W_i \to \Sigma$ preserve orientations.
\end{defn}

\begin{defn}
  Two representatives $\Sigma_1$ and $\Sigma_2$ of morphisms as in Definition~\ref{morphismRepresentatives} represent the same morphism in $\susyebord$ when there are open neighborhoods $U_i$ of the cores $\Sigma_i^c$, restrictions of the neighborhoods $W_i$, and an orientation preserving isometric isomorphism $U_0 \to U_1$ over $S$ which induces isomorphisms on the restrictions of the $W_i$.
\end{defn}

 We can use the super Lie group structure of $\roo$ to define a basic example of a family of bordisms parameterized by the supermanifold $\roo_{>0}$. We refer to this as the universal family of super-intervals. Note that $\roo_{>0}$ denotes the restriction of $\roo$ to the open subset of $\bbR$ consisting of the positive real numbers. 

The (positively oriented) super point, which we denote by $spt$, is the object of $\susyebord$ given by $\roo$ with the core inclusion $\bbR^{0|1} \hookrightarrow \roo$ and the decomposition of $\bbR \setminus \{0\}$ into the positive and negative axes, and the standard orientation.

\begin{defn} \label{universalInterval}
The universal family of intervals is constructed in the following manner. Pull $spt$ back to $\roo_{>0}$ by the unique map $\roo_{>0} \to pt$, and continue to refer to this object as $spt$. The universal family of intervals, a bordism over $\roo_{>0}$ which is an endomorphism of $spt$, is defined as $\roo_{>0} \times \roo$ with standard orientation. The inclusion of the source is the identity, and the inclusion of the target is given by 
  \begin{equation*}
    (s,\theta) (t, \eta) \mapsto (s,\theta) (s + t + \theta \eta, \theta + \eta)
  \end{equation*}
\noindent
In words, we use the super Lie group structure of $\roo$ to translate the super point when including it as the codomain.
\end{defn}

The negatively oriented super point is like the positive one, except for the choice of the other orientation of the real line. We denote the negatively oriented super point by $\overline{spt}$.

Having defined the objects and morphisms of the category, we now combine them to define $\susyebord$.

\begin{defn}
  The symmetric monoidal fibered category $\susyebord$ consists of the following. The objects of $\susyebord$ are families of Euclidean $0|1$-manifolds. The morphisms are equivalence classes of families of Euclidean $1|1$-manifolds together with inclusions of source and target. Composition of bordisms is by gluing.  The fibration $\susyebord \to \SMan$ maps a family to its parameter space. The symmetric monoidal structure is by taking disjoint unions of families of supermanifolds.
\end{defn}

Multiplication by $-1$ on odd coordinates yields an automorphism of the identity on $\susyebord$, and we refer to such a natural transformation as a flip. 

\begin{theorem}  \label{presentationOfEbord}  
The category $\susyebord$ admits the following presentation as a symmetric monoidal fibered category with flip. The following objects and bordisms are the generators.

\begin{itemize}
\item $spt$ and $\overline{spt}$ the positively and negatively oriented superpoints, which are families over $pt \in \SMan$
\item $\mathcal{L}:\overline{spt} \coprod spt \to \varnothing$ a bordism over $pt$
\item $\mathcal{R}: \varnothing \to spt \coprod \overline{spt}$ a family of bordisms over $\roo_{>0}$, with the inclusion of one of the points defined by translation using the group structure on $\roo$ (as in the universal family of intervals discussed in~\ref{universalInterval})
\end{itemize}

These data are subject to two relations, spelled out in detail below. One relation expresses that the family $\mathcal{R}$ composes with itself along $\mathcal{L}$ in a way determined by the Lie group structure of $\roo$. The other relation is that the identity is a limit of the family of endomorphisms given by $\mathcal{R} \circ \mathcal{L}$.
\end{theorem}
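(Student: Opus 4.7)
The plan is to verify the presentation by first showing that every object and every morphism of $\susyebord$ can be built from the listed generators using the symmetric monoidal structure, fibered pullback along maps in $\SMan$, and composition, and then checking that any two such expressions of the same morphism differ by a sequence of applications of the two stated relations.

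For the generation step I would first treat objects. By the definition of an object of $\susyebord$, the core is a family of $0|1$-Euclidean manifolds which is proper over $S_{\mathrm{red}}$, together with a decomposition of the complement of the core that records an orientation class near each component of the core. Locally on $S$ such a family is a finite disjoint union of copies of $\bbR^{0|1} \times S$, each equipped with one of the two orientations, hence a disjoint union of pullbacks of $spt$ and $\overline{spt}$; a standard patching argument using the symmetric monoidal structure then assembles these local models into a single generator-built object over $S$. For morphisms I would perform a local-to-global analysis of a representative bordism $\Sigma \to S$: pick a locally finite cover of the core $\Sigma^c$ by open sets each of which either (a) contains no critical feature and is isometrically a piece of the universal interval, (b) contains a right endpoint and is a model for $\mathcal{L}$, or (c) contains a left endpoint and is a model for a fibered pullback of $\mathcal{R}$. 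Arranging these pieces along $\Sigma^c$ (which is proper over $S_{\mathrm{red}}$, so after localization on $S$ only finitely many pieces meet any compact set) expresses $\Sigma$ as a composition of tensor products of $\mathcal{L}$, pullbacks of $\mathcal{R}$, and identities on $spt,\overline{spt}$, glued by isometric transition data that is recorded by the $\mathrm{Iso}(\roo)$-structure on $\Sigma$; the flip natural transformation absorbs the sign ambiguities in odd transition functions.

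Next I would check relations. Composing two pullbacks of $\mathcal{R}$ along $\mathcal{L}$ yields a family of intervals over $\roo_{>0} \times \roo_{>0}$ parameterized by two translation data; by direct computation using Definition~\ref{universalInterval} and the super Lie group structure, this family agrees with the pullback of the single family $\mathcal{R}$ along the multiplication $\roo_{>0} \times \roo_{>0} \to \roo_{>0}$. This is the semigroup relation. For the identity-as-limit relation, the key observation is that under the map $(s^2,\theta): \roo \setminus \{0\} \to \roo_{>0}$, the pullback family $(s^2,\theta)^* (\mathcal{R} \circ \mathcal{L})$ extends smoothly across $s=0$ to the identity endomorphism of $spt$; this is a direct computation in the presentation of $\roo$ and is precisely what the relation in the theorem asserts.

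Finally, to see that these relations suffice, I would argue by induction on the combinatorial complexity of the chosen decomposition: any two decompositions of the same $\Sigma$ differ by (i) local refinement, which is handled because any subdivision of an interval into two subintervals is exactly a composition $\mathcal{R} \circ \mathcal{L}$ as computed above, (ii) relocating a core that has collapsed to a non-generic position, handled by the identity-as-limit relation, and (iii) symmetric monoidal reshuffling and flips, which are formal. The main obstacle I expect is case (ii): making the ``identity as limit'' relation do exactly the right work when a core of an intermediate object in a composition degenerates to a point, so that what was a nontrivial interval becomes an identity on a pair of superpoints. Handling this carefully requires showing that every such degeneration is modeled by the single one-parameter family $(s^2,\theta)$ up to reparameterization by the Euclidean isometry group, which reduces the ambiguity to one already absorbed by the generators. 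Once these three cases are checked, the presentation is complete.
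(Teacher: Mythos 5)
The paper does not actually prove Theorem~\ref{presentationOfEbord}. The introduction states that the presentation ``we take for granted,'' and after stating the theorem the appendix simply refers the reader to \cite[Section 6]{HST} ``for a further discussion of the presentation of a closely related bordism category as well as a sketch of a proof.'' There is therefore no proof in this paper against which your proposal can be checked; you are attempting something the paper explicitly declines to do.

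Taken on its own terms, your outline follows the standard strategy for bordism-category presentation theorems: local decomposition of objects into superpoints, local decomposition of bordisms into elementary pieces ($\mathcal{L}$, pullbacks of $\mathcal{R}$, identities), direct verification of the two relations, and an induction showing that two decompositions of the same bordism are related by generator-level moves. Your own diagnosis of the main obstacle---that every degeneration of a core must be shown to be modeled, up to Euclidean isometry, by the single one-parameter family $(s^2,\theta)$ so that the identity-as-limit relation can do the work---is honest and correctly locates the crux. Two further points you have not engaged with but would need to be: (a) the equivalence relation on morphism representatives is by isometric isomorphism of \emph{some} neighborhood of the cores, not germ-level data, so the ``local refinement'' move must be shown to generate all such reparameterizations; and (b) the category is fibered over $\SMan$ and, crucially, the paper drops the ``positive'' condition of \cite{HST} requiring the reduced bordism to be a fiber bundle over $S$, so your cuts must be allowed to vary non-locally-trivially over the parameter space---this is precisely where the second relation is needed in families and where most of the technical effort of a real proof would lie. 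As an outline the proposal is plausible and consistent with what \cite{HST} sketches; as a proof it leaves the same gap the paper does, namely that the hard sufficiency step is gestured at rather than established.
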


See \cite[Section 6]{HST} for a further discussion of the presentation of a closely related bordism category as well as a sketch of a proof.

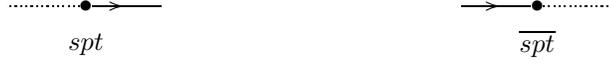
\begin{figure}
  \centering
  \qquad
  \begin{xy}
    (-40,0)*{}="POSL"; (-30,0)*{\bullet}="POSM";(-20,0)*{}="POSR";(20,0)*{}="NEGL";(30,0)*{\bullet}="NEGM";(40,0)*{}="NEGR";
    "POSL";"POSM"**\dir{.};
    "POSM";"POSR"**\dir{-};?(.5)*\dir{>};      
    "NEGL";"NEGM"**\dir{-};?(.5)*\dir{>};
    "NEGM";"NEGR"**\dir{.};
    (-30,-5)*{spt};(30,-5)*{\overline{spt}};
  \end{xy}
  \caption{Generating objects of the bordism category}
  \label{generatingObjects}
\end{figure}

Figure~\ref{generatingObjects} depicts the reduced manifold part of the generating objects. In that figure, the solid portion of the interval is the part $Y^+$ that enables gluing of bordisms along shared objects and which is involved in one of the conditions for bordisms; namely, it is the part that must be embedded in the core of the bordism when the object is part of the codomain. The arrow denotes the orientation on the ambient $1$-manifold.

Figure~\ref{generatingMorphisms} depicts the reduced manifold part of the generating bordisms. In the figure the source of a bordism is on the right, and the target on the left, to relate gluing to the way that composition of functions is usually written. The inclusions of the objects are implied by the arrow which indicates the orientation. We use the convention that $\mathcal{L}$ is a morphism from $\overline{spt} \coprod spt$ to the empty set, and that $\mathcal{R}$ is a morphism from the empty set to $spt \coprod \overline{spt}$.

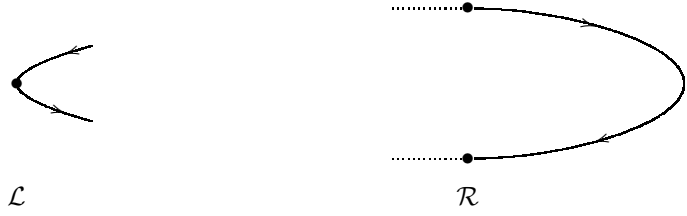
\begin{figure}
  \centering
  \qquad
  \begin{xy}
    (-40,-5)="LTOP";(-40,-15)="LBOT"; (-50,-10)*{\bullet}="LMID";
    "LTOP";"LBOT" **\crv{"LTOP" & (-60,-10) & "LBOT"};?(.2)*\dir{>};?(.8)*\dir{>};
    (0,0)*{}="A"; (0,-20)*{}="B"; (10,0)*{\bullet}="TOP";
    (10,-20)*{\bullet}="BOTTOM"; "TOP";"BOTTOM" **\crv{(20,0) &
      (40,-5) & (40, -15) & (20,-20) };?(.2)*\dir{>};?(.8)*\dir{>};
    "A";"TOP"**\dir{.}; "B";"BOTTOM"**\dir{.};
    (-50,-25)*{\mathcal{L}};(10,-25)*{\mathcal{R}};
  \end{xy}
  \caption{Generating morphisms of the bordism category}
  \label{generatingMorphisms}
\end{figure}

We now describe the two relations. 

The first is a super semigroup relation. Given maps $f,g:S \to \roo$, one can construct a family of bordisms over $S$ by pulling back $\mathcal{R}$ along $f$ and $g$ and then composing them via $\mathcal{L}$. One can also use the super Lie group structure of $\roo$ so that $f$ and $g$ determine a single map

\begin{equation*}
  \mu \circ (f \times g) : S \to \roo \times \roo \to \roo 
\end{equation*}
\noindent
to $\roo$ and then pull back the family $\mathcal{R}$ along this map. The relation is that these two constructions produce isomorphic families of bordisms over $S$. The relation is depicted in Figure~\ref{semigroupRelationFigure}, with the vertically stacked bordisms on the right glued along $\mathcal{L}$ to produce the bordism on the left.

\begin{figure}
  \centering
 \begin{xy}
    (0,0)*{}="A"; (0,-30)*{}="B"; (10,0)*{\bullet}="TOP";
    (10,-30)*{\bullet}="BOTTOM"; "TOP";"BOTTOM" **\crv{(30,0) &
      (40,-7) & (40, -22) & (30,-30) };?(.2)*\dir{>};?(.8)*\dir{>};
    "A";"TOP"**\dir{.}; "B";"BOTTOM"**\dir{.};
    (60,-15)*{=};
    (80,0)*{}="A"; (80,-20)*{}="B"; (90,0)*{\bullet}="TOP";
    (90,-20)*{\bullet}="BOTTOM"; "TOP";"BOTTOM" **\crv{(100,0) &
      (120,-5) & (120, -15) & (100,-20) };?(.2)*\dir{>};?(.8)*\dir{>};
    "A";"TOP"**\dir{.}; "B";"BOTTOM"**\dir{.};
    (100,-20)="LTOP";(100,-30)="LBOT"; (90,-25)*{\bullet}="LMID";
    "LTOP";"LBOT" **\crv{"LTOP" & (80,-25) & "LBOT"};?(.2)*\dir{>};?(.8)*\dir{>};
    (80,-30)*{}="AA"; (80,-40)*{}="BA"; (90,-30)*{\bullet}="TOPA";
    (90,-40)*{\bullet}="BOTTOMA"; "TOPA";"BOTTOMA" **\crv{(100,-30) &
      (110,-32) & (110, -37) & (100,-40) };?(.2)*\dir{>};?(.8)*\dir{>};
    "AA";"TOPA"**\dir{.}; "BA";"BOTTOMA"**\dir{.};
     (25,-50)*{\mathcal{R}_{(s + t+ \theta\eta,\theta + \eta)}};(90,-50)*{\mathcal{R}_{(s,\theta)} \underset{\mathcal{L}}{\circ} \mathcal{R}_{(t,\eta)}};
   \end{xy}  
\caption{Semigroup relation}
  \label{semigroupRelationFigure}
\end{figure}
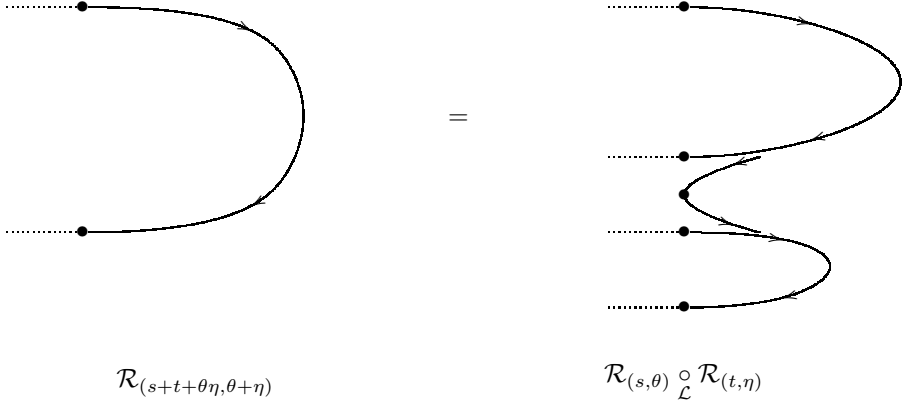

The second relation is that there is a family connecting the identity with the intervals of positive length. The map $(s^2,\theta): \roo \setminus \{0\} \to \roo_{> 0}$ defines a family of bordisms by pulling back $\mathcal{R}$ to the complement of $0 \in \roo$. Composing with $\mathcal{L}$, we obtain endomorphisms of $spt$ or $\overline{spt}$. The family thus defined (which has parameter space $\roo \setminus \{0\}$) extends as the identity at $0 \in \roo$. The relation is depicted in Figure~\ref{identityRelationFigure}. In that figure, we show that the composition of $\mathcal{R}$ and $\mathcal{L}$ on the right is the restriction of a family connected to the identity, namely the family on the far left, in which the vertical axis depicts the reduced length of the interval $\mathcal{I}_{(s^2,\theta)}$.
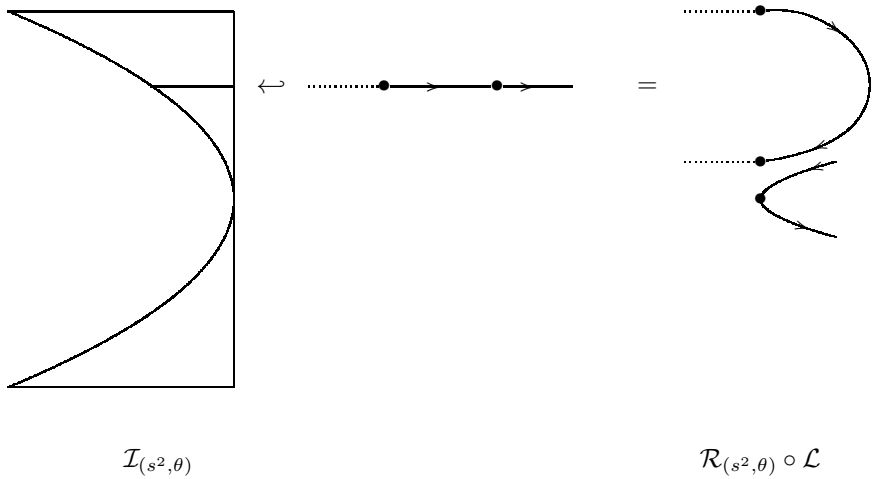
\begin{figure}
  \centering
  \begin{xy}
    (85,-10)*{=};
    (90,0)*{}="A"; (90,-20)*{}="B"; (100,0)*{\bullet}="TOP";
    (100,-20)*{\bullet}="BOTTOM";"TOP";"BOTTOM" **\crv{(100,0) &
      (105,1) & (115,-5)  & (115,-15)  & (107, -19)};?(.4)*\dir{>};?(.9)*\dir{>};
    (110,-20)="LTOP";(110,-30)="LBOT"; (100,-25)*{\bullet}="LMID";
    "LTOP";"LBOT" **\crv{"LTOP" & (90,-25) & "LBOT"};?(.2)*\dir{>};?(.8)*\dir{>};
    (90,0)*{}="X";(90,-20)*{}="Y";
    "TOP";"X"**\dir{.};
    "BOTTOM";"Y"**\dir{.};
    (50,-10)*{\bullet}="IL";
    (65,-10)*{\bullet}="IR";
    (40,-10)="ILL";
    (75,-10)="IRR";
    "ILL";"IL"**\dir{.};"IL";"IR"**\dir{-};?(.5)*\dir{>};
    "IR";"IRR"**\dir{-};?(.5)*\dir{>};
    (0,0)="PUL";
    (30,0)="PUR";
    (0,-50)="PLL";
    (30,-50)="PLR";
    (60,-25)="PMID";
    "PUL";"PLL";**\crv{"PMID"};
    "PUL";"PUR"**\dir{-};
    "PUR";"PLR"**\dir{-};
    "PLL";"PLR"**\dir{-};
    (19,-10)="JL";
    (30,-10)="JR";
    "JL";"JR";**\dir{-};
    (35,-10)*{\hookleftarrow};
    (100,-60)*{\mathcal{R}_{(s^2,\theta)}\circ \mathcal{L}};    
    (20,-60)*{\mathcal{I}_{(s^2,\theta)}};
   \end{xy} 

\caption{Identity relation}
  \label{identityRelationFigure}
\end{figure}

\begin{rmk}
  The second relation constrains the type of linear map that
  can arise as the image of a super interval under a field theory; the
  semigroup property of the previous relation together with this
  family imply that such a linear map cannot have a kernel.
\end{rmk}

General families of $1|1$-Euclidean manifolds over a parameter space $S$ are neither connected nor are they necessarily pulled back along maps $S \to \roo_{> 0}$. Nonetheless, symmetric monoidal functors out of the bordism category with codomain a stack are essentially defined on the generating objects and bordisms above, since general families are assembled from those, using sufficiently refined covers and the monoidal structure. One also imposes the spin-statistics condition (see \cite[Definition 6.44]{HST}) that the flip on the bordism category be sent to the grading involution on the corresponding algebraic objects.


\begin{thebibliography}{10}

\bibitem{Batchelor}
 {\scshape Batchelor, M.}
\newblock The structure of supermanifolds.
\newblock {\em Trans. Amer. Math. Soc.}, {\bf 253} (1979) 329--338.

\bibitem{Berwick-EvansHan}
{\scshape {Berwick-Evans}, D.; {Han}, F.}
\newblock {The equivariant Chern character as super holonomy on loop stacks}.
\newblock {\em ArXiv e-prints}, October 2016.
\newblock 1610.02362.

\bibitem{Cheung}
{\scshape Cheung, P}.
\newblock {Supersymmetric field theories and cohomology}.
\newblock {\em ArXiv e-prints}, November 2008.
\newblock 0811.2267.

\bibitem{DeligneMorgan}
{\scshape Deligne, P.; Morgan, J.}
\newblock Notes on supersymmetry (following {J}oseph {B}ernstein).
\newblock {\em Quantum fields and strings: a course for mathematicians,
  {V}ol. 1, 2 ({P}rinceton, {NJ}, 1996/1997)}, 41--97. {\em Amer. Math. Soc.,
  Providence, RI,} 1999.

\bibitem{DumitrescuOnChern}
{\scshape Dumitrescu, F.}
\newblock {A geometric view of the Chern character}.
\newblock {\em ArXiv e-prints}, February 2012.
\newblock 1202.2719.

\bibitem{GoerssJardine}
{\scshape Goerss, P.; Jardine, J.}
\newblock Simplicial Homotopy Theory. Reprint of the 1999 edition.
\newblock {\em Birkh\"auser Verlag, Basel,} 2009.

\bibitem{Grayson}
{\scshape Grayson, D.}
\newblock Higher algebraic {$K$}-theory. {II} (after {D}aniel {Q}uillen).
\newblock {\em Algebraic {$K$}-theory ({P}roc. {C}onf., {N}orthwestern
  {U}niv., {E}vanston, {I}ll., 1976)}, 217--240.  Lecture Notes in Math.,
  551. {\em Springer, Berlin,} 1976.

\bibitem{Han}
{\scshape Han, F.}
\newblock {Supersymmetric qft, super loop spaces and Bismut-Chern character}.
\newblock {\em ArXiv e-prints}, November 2007.
\newblock 0711.3862.

\bibitem{HohnholdKreckStolzTeichner}
{\scshape Hohnhold H.; Kreck, M.; Stolz, S.; Teichner, P.}
\newblock Differential forms and 0-dimensional supersymmetric field theories.
\newblock {\em Quantum Topol.}, {\bf 2(1)} (2011) 1--41.

\bibitem{HST}
{\scshape Hohnhold, H; Stolz, S.; Teichner, P.}
\newblock From minimal geodesics to supersymmetric field theories.
\newblock {\em A celebration of the mathematical legacy of {R}aoul {B}ott}, 207--274.
    CRM Proc. Lecture Notes, 50. {\em Amer. Math. Soc., Providence, RI,} 2010.

\bibitem{Markert}
{\scshape Markert, E.}
\newblock Field theory configuration spaces for connective {$ko$}-theory.
\newblock {\em Algebr. Geom. Topol.}, {\bf 10(2)} (2010) 1187--1219.

\bibitem{Segalelliptic}
{\scshape Segal, G.}
\newblock Elliptic cohomology (after {L}andweber-{S}tong, {O}chanine, {W}itten,
  and others).
\newblock {\em Ast\'erisque}, {\bf 695} (1989),  187--201.
  
\bibitem{Stoffel1}
{\scshape Stoffel, A.}
\newblock {Dimensional reduction and the equivariant Chern character}.
\newblock {\em ArXiv e-prints}, March 2017.
\newblock 1703.00314.

\bibitem{Stoffel2}
{\scshape Stoffel, A.}
\newblock {Supersymmetric field theories from twisted vector bundles}.
\newblock {\em ArXiv e-prints}, January 2018.
\newblock 1801.03016.

\bibitem{StolzTeichnerelliptic}
{\scshape Stolz, S; Teichner, P.}
\newblock What is an elliptic object?
\newblock {\em Topology, geometry and quantum field theory}, 247--343.
   London Math. Soc. Lecture Note Ser., 308. {\em Cambridge Univ.
  Press, Cambridge,} 2004.

\bibitem{STsusy}
{\scshape Stolz, S; Teichner, P.}
\newblock Supersymmetric field theories and generalized cohomology.
\newblock {\em Mathematical foundations of quantum field theory and
  perturbative string theory}, 279--340.  Proc. Sympos. Pure Math., 83.
  {\em Amer. Math. Soc., Providence, RI,} 2011.

\bibitem{Wittensusyqm}
{\scshape Witten, E.}
\newblock Supersymmetry and {M}orse theory.
\newblock {\em J. Differential Geom.}, {\bf 17(4)} (1982) 661--692.

\bibitem{Wittenindex}
{\scshape Witten, E.}
\newblock Index of {D}irac operators.
\newblock  {\em Quantum fields and strings: a course for mathematicians,
  {V}ol. 1, 2 ({P}rinceton, {NJ}, 1996/1997)}, 475--511. {\em Amer. Math.
  Soc., Providence, RI,} 1999.

\end{thebibliography}
\end{document}